\renewcommand{\theenumi}{\roman{enumi}}
\renewcommand{\p@enumii}{\theenumi--}
\newcommand{\Z}{\ensuremath{\mathbb{Z}}}
\newcommand{\R}{\ensuremath{\mathbb{R}}} 
\newcommand{\C}{\ensuremath{\mathbb{C}}}
\newcommand{\cE}{\ensuremath{\mathcal{E}}}
\newcommand{\cH}{\ensuremath{\mathcal{H}}}
\newcommand{\cJ}{\ensuremath{\mathcal{J}}}
\newcommand{\cL}{\ensuremath{\mathcal{L}}}
\newcommand{\cR}{\ensuremath{\mathcal{R}}} 
\newcommand{\cS}{\ensuremath{\mathcal{S}}}
\newcommand{\cX}{\ensuremath{\mathcal{X}}}
\newcommand{\w}{\ensuremath{\omega}} 
\newcommand{\wc}{\ensuremath{\w_{\text{can}}}}
\newcommand{\s}{\ensuremath{\sigma}}
\newcommand{\Diff}{\ensuremath{\text{Diff}}}
\begin{document} 


\fontsize{10}{14}

\begin{center} 
{\Large \bf Monodromy groups of Lagrangian tori in $\R^4$} \\ 

\vspace{.3in} 
Mei-Lin Yau\footnote{Research 
Supported in part by National Science Council grant  
97-2115-M-008-009-. 
 
{\em 2000 Mathematics Subject Classification}. Primary 53D12; Secondary 57R52. 

{\em Key words and phrases}.  Clifford torus; Lgrangian isotopy; smooth isotopy; Lagrangian monodromy group; smooth monodromy group; Maslov class; linking class.}  
\end{center} 

\vspace{.2in} 
\begin{abstract} 
We determine the Lagrangian monodromy group $\cL(T)$ and the smooth monodromy group 
$\cS(T)$ of a Clifford torus $T$ in $\R^4$. We  show that $\cL(T)$ is isomorphic to the 
infinite dihedral group, and $\cS(T)$ is  generated by three reflections. 
We give explicit formulas for both groups. We also show that if a Lagrangian torus is smoothly isotopic 
to a Clifford torus then the smooth isotopy can be chosen to be Lagrangian outside of a disc. 
\end{abstract}

\newtheorem{theo}{Theorem}[section]
\newtheorem{cond}[theo]{Condition}
\newtheorem{defn}[theo]{Definition}
\newtheorem{exam}[theo]{Example}
\newtheorem{lem}[theo]{Lemma}
\newtheorem{cor}[theo]{Corollary}

\newtheorem{prop}[theo]{Proposition}
\newtheorem{rem}[theo]{Remark}
\newtheorem{notn}[theo]{Notation}
\newtheorem{fact}[theo]{Fact}

%
%

\section{Introduction}

In this note we work in the standard symplectic 4-space $(\R^4, \w=\sum_{j=1}^2dx_j\wedge dy_j)$ 
unless otherwise mentioned. 
Let $L\overset{\iota}{\hookrightarrow}(\R^4,\w)$ be an embedded Lagrangian torus with respect to 
the standard symplectic 2-form $\w$. The Lagrangian condition means that the pull-back 2-form $\iota^*\w=0\in \Omega^2(L)$ vanishes on $L$. Gromov \cite{G} proved that $L$ is not {\em exact}, i.e., the pull-back 1-form $\iota^*\lambda$ of a primitive $\lambda$ of $\w=d\lambda$ represents a nontrivial class in the cohomology group $H^1(L,\R)$. 

Let $\Diff_0^c(\R^4)$ denote the group of orientation preserving diffeomorphisms with compact support on $\R^4$ that are isotopic to the identity map. We are interested in studying various types of self-isotopies of $L$. 
It is well-known that to a smooth isotopy $L_s$, $s\in [0,1]$, between two embedded tori $L_0,L_1$, we may associate a  family of maps $\phi_s\in \Diff_0^c(\R^4)$ with $\phi_0=id$ such that $\phi_s(L)=L_s$. 
We will make no distinction between $L_s$ and the associated maps $\phi_s$ from now on.

A path $\phi_s\in \Diff_0^c(\R^4)$ with $0\leq s\leq 1$ and $\phi_0=id$ associates to a fixed torus $L$ a family of tori $L_s:\phi_s(L)$ in 
$\R^4$. The family of maps  $\phi_t\in \Diff_0^c(\R^4)$ 
is called a {\em smooth self-isotopy} of 
$L$ if $\phi_1(L)=L$. Moreover, if all $L_s$ are Lagrangian with respect to $\w$ ($\w$-Lagrangian) then $\phi_s$ is called a {\em Lagrangian self-isotopy} of $L$. This is equivalent to say that 
$L$ is $\phi_s^*\w$-Lagrangian. Suppose in addition that the cohomology class of $\iota^*\phi^*_s\lambda$ is independent of $s$, then $\phi_s$ is called a {\em Hamiltonian self-isotopy} of $L$. 
Equivalently, $\phi_s$ is Hamiltonian if it is generated by a Hamiltonian vector field.  
Each self-isotopy $\phi_s$ of $L$ associates an isomorphism 
\[ 
(\phi_1)_*:H_1(L,\Z)\to H_1(L,\Z), 
\] 
called 
a {\em smooth} (resp. {\em Lagrangian, Hamiltonian}) monodromy of $L$ if $\phi_t$ is smooth (resp. Lagrangian, Hamiltonian). The group of all smooth monodromies of $L$ is called the {\em smooth monodromy group} (SMG)  of $L$, and is denoted by $\cS(L)$. Likewise, $\cL(L)$ and $\cH(L)$ denote the {\em Lagrangian monodromy group} (LMG) and the {\em Hamiltonian monodromy group} (HMG) of 
$L$ respectively. It is easy to see that $\cH(L)\subset \cL(L)\subset \cS(L)$. Note that though we focus only on Lagrangian 
$2$-tori here, the groups $\cH(L)$, $\cL(L)$ and 
$\cS(L)$ are defined for any embedded Lagrangian submanifold 
$L$ of any dimension. 

The interest in such monodromy groups is  to study the Lagrangian knot problem \cite{EP3} 
from a different perspective. Clearly if $L$ and $L'$ are smoothly isotopic, then their smooth monodromy 
groups are isomorphic. Similar conclusion holds for the  Lagrangian and the Hamiltonian cases as well.
In \cite{Y1} we studied $\cH(L)$ for $L$ being either a monotone Clifford torus or a Chekanov torus. The latter was constructed (and 
called a {\em special torus}) by Chekanov in \cite{C}. 
 We proved that these two 
tori are distinguished by their spectrums associated to their 
Hamiltonian monodromy groups \cite{Y1}. Another result 
concerning $\cH(L)$ was obtained by Hu, Lalonde and Leclercq  in their preprint \cite{HLaLe}, in which they  proved that 
the Hamiltonian monodromy group $\cH(L)$  is trivial for any weakly exact Lagrangian submanifold $L$ of a symplectic manifold. 
In this note we will focus on $\cL(L)$ and $\cS(L)$ instead. 

Recall from \cite{P} that the Maslov class $\mu=\mu_L\in  H^1(L,\Z)$ of a Lagrangian torus $L\subset \R^4$ is nonzero with divisibility 2. An element $h\in \cL(L)$ clearly has to satisfy $\mu\circ h=\mu$. 
 Note that in general symplectic manifolds, 
$h\in \cL(L)$ also has to preserve the {\em linking class} 
$\ell_L\in H^1(L,\Z)$ (see \cite{EP2} and Section \ref{maslink})  
whenever defined. However, since $\ell_L=0$ for any embedded 
$L\subset \R^4$ \cite{EP2}, it imposes no further restriction on 
$\cL(L)$.  
Let $G_\mu$ denote the formal subgroup of all group isomorphisms 
$g:H_1(L,\Z)\to H_1(L,\Z)$ such that $\mu\circ g=\mu$. Clearly $\cL(L)$ is a subgroup of $G_\mu$. Our 
first result is the following: 

\begin{theo} \label{cL} 
Assume that $T$ is a Clifford torus. Then $\cL(T)=G_\mu$. 
\end{theo}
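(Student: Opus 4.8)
The plan is to show that every element of $G_\mu$ is realized as a Lagrangian monodromy of the Clifford torus $T$. Fix the standard basis $e_1, e_2$ of $H_1(T,\Z)\cong\Z^2$ coming from the two circle factors of $T=S^1(r)\times S^1(r)$, and normalize so that the Maslov class is $\mu = (2,2)$ in the dual basis, i.e. $\mu(e_1)=\mu(e_2)=2$. First I would compute $G_\mu$ explicitly: a matrix $g=\begin{pmatrix} a & b \\ c & d\end{pmatrix}\in GL_2(\Z)$ satisfies $\mu\circ g=\mu$ iff $a+c=1$ and $b+d=1$, and one checks directly that the group of such matrices is generated by $R_1=\begin{pmatrix} 1 & 1 \\ 0 & -1\end{pmatrix}$ and $R_2=\begin{pmatrix}-1 & 0 \\ 1 & 1\end{pmatrix}$ (or equivalently by the coordinate swap $S=\begin{pmatrix}0 & 1 \\ 1 & 0\end{pmatrix}$ together with one parabolic $\begin{pmatrix}1 & 1 \\ 0 & 1\end{pmatrix}$-type element preserving $\mu$), and is isomorphic to the infinite dihedral group $D_\infty$, consistent with the abstract's description of $\cL(T)$. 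So it suffices to realize a generating set by explicit Lagrangian self-isotopies.

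Since $\cH(T)\subset\cL(T)$, I would first quote from \cite{Y1} the Hamiltonian monodromies already known for the Clifford torus; these supply the ``Dehn twist'' parabolic generators — geometrically, rotating one $S^1$-factor while dragging the other around produces the shear $e_2\mapsto e_2+e_1$ (and its transpose), realized by a compactly supported Hamiltonian flow. The genuinely new input is a Lagrangian (non-Hamiltonian) self-isotopy realizing the reflection that exchanges the two factors, $e_1\leftrightarrow e_2$: for the Clifford torus this is visibly induced by the linear symplectomorphism of $\R^4$ swapping the two symplectic coordinate planes $(x_1,y_1)\leftrightarrow(x_2,y_2)$. One must check this swap lies in $\Diff_0^c(\R^4)$ — it is linear symplectic, hence in $Sp(4,\R)$ which is connected, hence smoothly (indeed symplectically) isotopic to the identity; compact support is arranged by the standard cutoff trick, interpolating near infinity through the isotopy in $Sp(4,\R)$ while keeping each intermediate torus Lagrangian (this is automatic since each intermediate map is symplectic and $T$ is Lagrangian). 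Its action on $H_1$ is precisely the swap $S$. Combining $S$ with the shear gives all of $G_\mu$ by the generation computation above.

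The main obstacle — and the place where Clifford-ness is essential — is the reflection generator: for a general Lagrangian torus there is no ambient symmetry exchanging the homology generators, which is exactly why $\cL(T)=G_\mu$ is special to $T$ and not expected in general. A secondary technical point is ensuring compact support: the symplectic isotopy swapping the coordinate planes and the Hamiltonian shears are naturally defined on all of $\R^4$, so I would fix a large ball containing $T$ and all intermediate tori and use a cutoff function equal to $1$ there, verifying that the cut-off flow still maps $T$ to Lagrangian tori throughout (for the symplectic path, using that outside a neighbourhood of $T$ the Lagrangian condition on $L_s$ is vacuous, and for the Hamiltonian path, that cutting off the Hamiltonian away from the swept region leaves the flow near $T$ unchanged). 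Putting these realizations together with the reverse inclusion $\cL(T)\subseteq G_\mu$ already noted in the text yields $\cL(T)=G_\mu$.
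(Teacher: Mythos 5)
There is a genuine gap. Your realization of the swap $f_1=\begin{pmatrix}0&1\\1&0\end{pmatrix}$ (cut off the linear symplectomorphism exchanging the two coordinate planes) is correct and is exactly the paper's Case~1, but the swap generates only a $\Z_2$ inside $G_\mu\cong D_\infty$, so everything hinges on the \emph{second} generator, and that is where your argument fails. First, a computational point: your claimed generators $R_1=\begin{pmatrix}1&1\\0&-1\end{pmatrix}$ and $R_2=\begin{pmatrix}-1&0\\1&1\end{pmatrix}$ do not satisfy your own condition (column sums $a+c=b+d=1$), so they are not in $G_\mu$; the reflections in $G_\mu$ are $f_n=\begin{pmatrix}1-n&2-n\\n&-1+n\end{pmatrix}$. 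More seriously, the ``parabolic'' element you propose to import from \cite{Y1}, the shear $e_2\mapsto e_2+e_1$, sends a class of Maslov index $2$ to one of Maslov index $4$, so it does not preserve $\mu$ and cannot be the monodromy of \emph{any} Lagrangian self-isotopy. The determinant-one elements of $G_\mu$ are Dehn twists along the Maslov-zero class $\gamma_0=-e_1+e_2$, not along a circle factor, and neither $\R^4$-linearity nor an obvious Hamiltonian rotation produces them.

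This missing generator is the actual content of the theorem. The paper obtains it by a construction of a different nature: take the core circle $C_0=\{(0,be^{it})\}$, rotate it by the $SO(4)$ (non-symplectic) family $\Psi_s$ in the $x_1y_2$-plane until it returns to itself with reversed orientation, and take the Lagrangian boundary $L_s=\partial D^\w_s$ of the symplectic normal disc bundles of the moving circles (Proposition~\ref{Ds}); this yields a Lagrangian self-isotopy of $T_{\epsilon,b}$ whose monodromy is the reflection $f_0=\begin{pmatrix}1&2\\0&-1\end{pmatrix}$, and $\langle f_0,f_1\rangle=G_\mu$. Your proposal contains no mechanism that could produce $f_0$ (or equivalently the Dehn twist $g_1$ along $\gamma_0$), and your closing remark that the reflection exchanging the factors is ``the place where Clifford-ness is essential'' mislocates the difficulty: the swap is the easy generator, and the orientation-reversing rotation of the core circle is the essential new input.
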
 
The group $G_\mu$ is freely generated by two generalized 
reflections $f_0,f_1$ (see (\ref{g+}), (\ref{g-}), (\ref{gmu}) in Section \ref{lmg}) with $f_i(\gamma_0)=-\gamma_0$, where 
$\gamma_0\in H_1(T,\Z)$ is a primitive class with $\mu_T(\gamma_0)=0$, hence $G_\mu$ is isomorphic to the infinite dihedral group 
$D_\infty$ \cite{Hum}.

For the smooth counterpart, we obtain the following result 
due to the vanishing of $\ell_L$: 

\begin{theo} \label{L0L1} 
Let $L_s=\phi_s(L_0)$, $0\leq s\leq 1$, $\phi_0=id$, be a smooth isotopy between two Lagrangian tori $L_0,L_1\subset \R^4$. Then 
for any $\gamma\in H_1(L_0,\Z)$, 
\[ 
\mu({\phi_1}_*(\gamma))-\mu(\gamma)\in 4\Z . 
\] 
I.e., 
\[ 
\phi_1^*\mu-\mu \in H^1(L_0,\Z) \quad \text{ has divisibility 4}. 
\] 
\end{theo}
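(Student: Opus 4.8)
The plan is to reduce the statement to a known fact about smooth isotopies of Lagrangian tori in $\R^4$, namely the vanishing of the linking class $\ell_L$, together with a parity/divisibility tracking of the Maslov class along the isotopy. The key point is that although the Maslov class $\mu$ is defined using the symplectic structure (via the Lagrangian Gauss map into $U(2)/O(2)$, or equivalently via a choice of trivialization of the canonical bundle), its value on a homology class can change only in a controlled way under an arbitrary smooth ambient isotopy, and this controlled change is governed by a second characteristic class that is a \emph{smooth} invariant of the embedding — precisely the linking class, which is zero in $\R^4$ by \cite{EP2}.

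Concretely, first I would fix a symplectic trivialization of $T\R^4$ and recall that for a Lagrangian torus $L$, the Maslov class $\mu_L$ is $2$ times the pullback of the generator of $H^1(U(2)/O(2),\Z)\cong\Z$ under the Lagrangian Gauss map $G_L:L\to U(2)/O(2)=\La(2)$. Along the smooth isotopy $L_s=\phi_s(L_0)$, the tangent planes $T_x L_s$ sweep out a family of (not necessarily Lagrangian) $2$-planes in $\R^4$; the relevant target is then the full Grassmannian of oriented $2$-planes, or better, the $U(2)$-frame data. The difference $\phi_1^*\mu_{L_1}-\mu_{L_0}$ measures the obstruction to deforming the Lagrangian Gauss map of $L_0$ to the pullback of that of $L_1$ \emph{through the smooth isotopy}, i.e. within the larger space of $2$-planes. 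I would compute this obstruction class in $H^1(L_0,\Z)$ and identify it, via the standard fibration relating $\La(2)\hookrightarrow \widetilde{G}(2,4)$, with twice a class coming from $H^1$ of the relevant Grassmannian — hence $\phi_1^*\mu_{L_1}-\mu_{L_0}$ lands in $2H^1(L_0,\Z)$ automatically (this already gives divisibility $2$, which is weaker than claimed). To upgrade $2$ to $4$, I would bring in the linking class: the extra factor of $2$ comes from the fact that in $\R^4$ the relevant obstruction class is itself even, being pinned down by the (vanishing) linking class of the Lagrangian tori at the two ends, or equivalently by the fact that $H_2(\R^4\setminus L;\Z)$ and the associated framing invariant force the correction term to be twice something. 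Invoking $\ell_{L_0}=\ell_{L_1}=0$ (Section \ref{maslink}, \cite{EP2}), the correction term is a well-defined element of $4\Z$ on each primitive class, and the theorem follows.

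The main obstacle I anticipate is making the comparison between $\mu_{L_1}$ and $\phi_1^*\mu_{L_1}$ versus $\mu_{L_0}$ fully rigorous: one must relate a \emph{symplectic} invariant at $s=1$ (computed using $\w$ and the Lagrangian condition on $L_1$) with what the ambient smooth isotopy does to a \emph{symplectic} invariant at $s=0$, and these agree only up to the smooth-homotopy data of the family of $2$-planes $T_xL_s$. I would handle this by working with the complexified tangent bundle and the square of the canonical section: the Maslov class is the winding of $\det^2$ of a unitary frame of $TL_s\otimes\C$, and while $\det^2$ is not defined for a general oriented $2$-plane, its \emph{difference} between two Lagrangian planes that are smoothly connected through oriented $2$-planes is $2$ times the relative winding of $\det$ in $U(2)$, which in turn is constrained by $\pi_1$ of the oriented Grassmannian and by the vanishing linking data. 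The parity bookkeeping — confirming that the net effect is divisibility exactly $4$ and not merely $2$ — is the delicate step, and I expect it to rest squarely on $\ell_L=0$ in $\R^4$ exactly as the statement advertises.
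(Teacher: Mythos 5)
Your plan correctly identifies one of the two ingredients the argument needs --- the vanishing of the linking class $\ell_{L_0}=\ell_{L_1}=0$ --- but the step you yourself flag as ``the delicate step'' is exactly where the whole proof lives, and the mechanism you propose for it is not the right one. The divisibility by $4$ does \emph{not} come from the linking class alone. In the actual argument the linking class only serves to normalize the situation: for a curve $C_0\subset L_0$ representing a primitive $\gamma$, with $C_s=\phi_s(C_0)$, one compares the tangential framing $\sigma_0$ of the symplectic normal bundle of $C_0$ (an $m$-framing, $m=\mu(\gamma)/2$) with its image $(\phi_1)_*\sigma_0$ (a $k$-framing of $N^\w_1$, $k=\mu((\phi_1)_*\gamma)/2$), and the vanishing of $\ell$ at both ends guarantees that $(\phi_1)_*J_0\dot{C}_0$ is homotopic to $J_0\dot{C}_1$ as a framing of the normal bundle of $L_1$, so that the loop of differentials $t\mapsto(\phi_1)_*(t)\in GL^+(4,\R)$ can be put into the block form $\mathrm{diag}(1,1,R_{(k-m)t})$ with $R$ a rotation. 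The extra factor of $2$ then comes from a different source entirely: this loop is null-homotopic in $GL^+(4,\R)$, being the endpoint of the family of loops $(\phi_s)_*$ that starts at the constant loop, and $\pi_1(GL^+(4,\R))\cong\Z_2$ forces $k-m$ to be even. Your proposal never invokes this $\Z_2$, and without it nothing rules out $k-m$ odd, i.e.\ you cannot get past divisibility $2$.

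The Gauss-map route you sketch is also structurally problematic. The oriented Grassmannian $\widetilde G(2,4)\cong S^2\times S^2$ is simply connected, so the homotopy of tangent planes $T_xL_s$ through arbitrary oriented $2$-planes that the smooth isotopy provides carries no $\pi_1$-information whatsoever; ``$\pi_1$ of the oriented Grassmannian'' constrains nothing, and the obstruction you want to compute vanishes in that target. To extract the mod-$2$ obstruction one must track more than the $2$-plane field: the full loop of ambient differentials together with a normal framing pinned down by $\ell=0$, landing in $GL^+(4,\R)$ whose fundamental group is $\Z_2$. Finally, your intermediate claim that $\phi_1^*\mu_{L_1}-\mu_{L_0}$ lands in $2\cdot H^1(L_0,\Z)$ is true but contentless: it already follows from Polterovich's theorem that each of $\mu_{L_0}$ and $\mu_{L_1}$ individually has divisibility $2$, so the fibration $\La(2)\hookrightarrow\widetilde G(2,4)$ buys nothing there either.
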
 
Thus $\cS(L)$ is a subgroup of 
\[ 
\cX=\cX_L:=\{ g\in \text{Isom}(H_1(L,\Z))\mid \mu_L\circ g-\mu_L\in 4\cdot H^1(L,\Z)\}. 
\] 

We determine $\cS(L)$ for the case of a Clifford torus: 

\begin{theo} \label{cS} 
Assume that $T$ is a Clifford torus, then $\cS(T)=\cX_T$.   
In particular, $\cS(T)$ is generated by $\cL(T)$ and a reflection along 
a class $\gamma\in H_1(T,\Z)$ with $\mu_T(\gamma)=2$. 
\end{theo}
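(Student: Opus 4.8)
The plan is to prove $\cS(T)=\cX_T$ by establishing both inclusions. The inclusion $\cS(T)\subset \cX_T$ is already delivered by Theorem \ref{L0L1} applied to a smooth self-isotopy. So the content is the reverse inclusion $\cX_T\subset \cS(T)$: every lattice isomorphism $g$ of $H_1(T,\Z)$ with $\mu_T\circ g-\mu_T\in 4\cdot H^1(T,\Z)$ is realized by a compactly supported smooth self-isotopy of $T$ in $\R^4$. Since by Theorem \ref{cL} we already know $\cL(T)=G_\mu$ and $\cL(T)\subset\cS(T)$, it suffices to exhibit a single extra smooth monodromy not lying in $G_\mu$ and to check that, together, these generate all of $\cX_T$. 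Concretely, I would first do the purely group-theoretic bookkeeping: fix a basis $\gamma_0,\gamma_1$ of $H_1(T,\Z)$ with $\mu_T(\gamma_0)=0$, $\mu_T(\gamma_1)=2$, describe $\cX_T$ explicitly as a subgroup of $GL_2(\Z)$ (the $\mu$-preserving ones are $G_\mu$, and allowing $\mu\mapsto\mu+4(\cdot)$ enlarges this by a $\Z/2$-like factor), and show that $\cX_T$ is generated by $f_0,f_1$ (the generators of $G_\mu$) together with one reflection $r$ fixing a class sent by $\mu_T$ to $2$; equivalently $[\cX_T:G_\mu]=2$. This reduces everything to constructing one smooth self-isotopy inducing such an $r$.

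The heart of the argument is therefore geometric: realize the reflection $r$ by an honest compactly supported smooth isotopy of $\R^4$ carrying $T$ back to itself while acting on $H_1(T,\Z)$ by $r$. Here I would exploit that smooth (as opposed to Lagrangian) isotopies of tori in $\R^4$ are extremely flexible — the Clifford torus bounds an obvious solid handlebody / can be viewed as the boundary of a tubular neighborhood of an unknotted circle, or as a product of two circles in two orthogonal $2$-planes — and use an explicit rotation or reflection-type motion of one of the circle factors. The natural candidate: take the standard embedded torus as (a smoothing of) $\partial N(C)$ for an unknot $C$, slide/rotate so that the meridian and a curve with $\mu=2$ get interchanged or reflected; because the ambient manifold is $\R^4$ and the torus is smoothly unknotted, such a motion extends to $\Diff_0^c(\R^4)$. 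One must be a little careful that the motion is genuinely compactly supported and returns to the identity-framed torus, but this is a standard isotopy-extension argument. I would write down the action on $\pi_1(T)$ explicitly and verify it is the desired $r$, in particular that $\mu_T\circ r-\mu_T$ has divisibility $4$ and not $2$ — this is exactly where the linking-class vanishing $\ell_L=0$ (via Theorem \ref{L0L1}) both permits the jump by $4$ and forbids the jump by $2$, so the construction is tight.

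The main obstacle I anticipate is not the group theory but pinning down the Maslov-class action of the constructed smooth isotopy: a smooth self-isotopy need not preserve the Lagrangian condition, so $\mu_T$ is a priori only defined at the two Lagrangian endpoints, and one must compute how $\mu$ transforms across a non-Lagrangian family. The clean way is to avoid computing along the isotopy and instead compute $\phi_1^*\mu$ directly at the Lagrangian torus $T=\phi_1(T)$ using the geometry of the self-map $\phi_1|_T$ together with Theorem \ref{L0L1}, which constrains the answer to lie in a coset of $4\cdot H^1(T,\Z)$; the explicit geometric model of $\phi_1$ then identifies which coset, and hence that $\phi_1{}_* = r$ on $H_1$. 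Once that single reflection is in hand, combining it with $\cL(T)=G_\mu=\cS(T)\cap G_\mu$ and the index-$2$ computation yields $\cS(T)=\cX_T$, and the final sentence of the theorem (that $\cS(T)$ is generated by $\cL(T)$ and a reflection along a class of Maslov number $2$) is immediate from the construction.
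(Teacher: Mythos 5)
Your overall architecture matches the paper's: the inclusion $\cS(T)\subset\cX_T$ comes from Theorem \ref{L0L1}, and the reverse inclusion is obtained by realizing enough monodromies smoothly. But there is a genuine gap in the group-theoretic half. You assert that ``$\cX_T$ is generated by $f_0,f_1$ together with one reflection $r$; equivalently $[\cX_T:G_\mu]=2$.'' The equivalence is false, and the index-$2$ claim is false: $G_\mu\cong D_\infty$ is virtually cyclic, while $\cX_T$ contains the subgroup $\cE=\langle\tau_1^2,\tau_2^2\rangle$, which by Sanov's theorem is free of rank $2$; hence $G_\mu$ has \emph{infinite} index in $\cX_T$. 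The generation statement $\langle G_\mu,\bar r_1\rangle=\cX_T$ is true, but it is the main combinatorial content of the proof and cannot be dismissed as bookkeeping. The paper establishes it by quoting Sanov's explicit description of $\cE$ as the set of matrices $\bigl(\begin{smallmatrix}1+4p&2s\\2r&1+4q\end{smallmatrix}\bigr)$ in $GL(2,\Z)$, decomposing $\cX_T=\cX^o\sqcup\cX^e$, and running a four-case parity/coset analysis showing that each $h\in\cX^o$ lands in $\cE$ after multiplication by $\bar r_1$, $\bar r_2$, or $\bar r_1\bar r_2$ (Propositions \ref{rsx} and \ref{cX}). Without some such argument your proof does not close: knowing $G_\mu\subset\cS(T)$ and $r\in\cS(T)$ only gives $\langle G_\mu,r\rangle\subset\cS(T)$, and you have not shown this subgroup is all of $\cX_T$.

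The geometric half is also too thin as written. Isotopy extension transports a given isotopy of $T$ inside $\R^4$ to an ambient one, but the issue is to \emph{produce} a self-isotopy of $T$ in $\R^4$ inducing the reflection, and ``slide/rotate so that the meridian and a $\mu=2$ curve get interchanged or reflected'' does not identify the mechanism. The paper's construction is to view $T$ inside a solid tube $U\cong B^3\times S^1$ around a core circle, extend the surface diffeomorphism fiberwise by a ($t$-dependent) element of $SO(3)$, and then use $\pi_0(SO(3))=0$ (for the reflection $\bar r_1$, which extends to $B^3$ as a rotation by $\pi$) and $\pi_1(SO(3))=\Z_2$ (for the Dehn twists, which is precisely why only \emph{even} twists $\tau_j^{2k}$ are realized) before extending ambiently with compact support. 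That $\pi_1(SO(3))$ computation is the geometric source of the divisibility-by-$4$ phenomenon; Theorem \ref{L0L1} gives the matching obstruction but not the construction. I would redo your second step along these explicit lines and replace the index-$2$ claim by the Sanov-based coset argument.
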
 

It turns out that any smooth isotopy 
between a Lagrangian torus and a Clifford torus can be modified at 
either end by a self-isotopy to match the Maslov classes at both ends. 
We have the following:

\begin{prop}  \label{StoL} 
Let $L\subset\R^4$ be an embedded Lagrangian torus smoothly isotopic to a Clifford torus $T$. Then there exists a smooth isotopy 
$\phi_s\in\Diff^C_0(\R^4)$, $s\in [0,1]$  with $\phi_0=id$, 
$\phi_1(T)=L$, and $\phi_1$ preserves the corresponding 
Maslov classes. i.e., 
\[ 
\phi^*_1\mu_L=\mu_T. 
\] 
Moreover, one can modify $\phi_s$ so that 
$\phi_s(T\setminus D)$ is Lagrangian for $s\in[0,1]$, 
where $D\subset T$ is an embedded disc. 
\end{prop}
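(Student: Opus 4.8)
The plan is to use Theorem \ref{cS} together with Theorem \ref{cL} to correct a given smooth isotopy by post-composing with a self-isotopy.

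\medskip

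\noindent\textbf{Step 1: Produce a smooth isotopy and measure the Maslov defect.}
Since $L$ is smoothly isotopic to $T$, pick any smooth isotopy $\psi_s\in\Diff^c_0(\R^4)$ with $\psi_0=id$ and $\psi_1(T)=L$. Consider the isomorphism $\psi_1^*\colon H^1(L,\Z)\to H^1(T,\Z)$ and compare $\psi_1^*\mu_L$ with $\mu_T$. By Theorem \ref{L0L1} (applied with $L_0=T$, $L_1=L$) the difference $\psi_1^*\mu_L-\mu_T$ lies in $4\cdot H^1(T,\Z)$; equivalently, the induced automorphism $g:=(\psi_1^{-1}\circ\iota_L)_*\circ(\iota_T)_*^{-1}$ of $H_1(T,\Z)$ satisfies $\mu_T\circ g-\mu_T\in 4\cdot H^1(T,\Z)$, i.e. $g\in\cX_T$.

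\medskip

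\noindent\textbf{Step 2: Realize the correction by a smooth self-isotopy of $T$.}
By Theorem \ref{cS}, $\cS(T)=\cX_T$, so there is a smooth self-isotopy $\eta_s\in\Diff^c_0(\R^4)$ of $T$ (i.e. $\eta_0=id$, $\eta_1(T)=T$) whose monodromy is $(\eta_1)_*=g^{-1}$ on $H_1(T,\Z)$. Now set $\phi_s$ to be the concatenation of $\eta_s$ followed by $\psi_s$, reparametrized to $[0,1]$; then $\phi_0=id$, $\phi_1(T)=\psi_1(\eta_1(T))=\psi_1(T)=L$, and on homology $\phi_1^*\mu_L=\eta_1^*\psi_1^*\mu_L=\eta_1^*\mu_T=\mu_T$, since $(\eta_1)_*=g^{-1}$ exactly undoes the defect measured in Step 1. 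This gives the first assertion.

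\medskip

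\noindent\textbf{Step 3: Upgrade the correcting part to be Lagrangian off a disc.}
For the ``moreover'' clause we need the correcting self-isotopy $\eta_s$ to move $T$ through tori that are Lagrangian away from one fixed disc. Here I would invoke Theorem \ref{cL}: since $g^{-1}\in\cX_T=\cS(T)$ we cannot in general realize it by a genuine Lagrangian self-isotopy, but $\cS(T)$ is generated by $\cL(T)=G_\mu$ together with a single reflection along a class of Maslov index $2$ (Theorem \ref{cS}). The $G_\mu$-part can be realized by an honest Lagrangian self-isotopy by Theorem \ref{cL}, keeping all of $T$ Lagrangian. For the extra reflection factor, one performs the nontrivial smooth surgery in a small ball meeting $T$ in a disc $D$: outside $D$ the isotopy is kept Lagrangian (indeed can be taken to be the identity there), while inside the ball one carries out the smooth move realizing the Maslov-$2$ reflection. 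Concatenating these pieces, one obtains $\phi_s$ with $\phi_s(T\setminus D)$ Lagrangian for all $s$, as claimed. I would then remark that $\psi_s$ itself need not be modified on the $L$-end since all corrections have been absorbed into the Clifford end.

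\medskip

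\noindent The main obstacle I anticipate is Step 3: turning the abstract factorization $\cS(T)=\langle\cL(T),\text{reflection}\rangle$ into an explicit geometric isotopy that is Lagrangian outside a \emph{single fixed} disc $D$, rather than outside a union of discs that grows with the word length of $g^{-1}$. This requires care in how the Lagrangian self-isotopies realizing the $G_\mu$-generators (which are global, keeping $T$ everywhere Lagrangian) are interleaved with the single local non-Lagrangian surgery, so that the non-Lagrangian region can be confined to one disc throughout; the global Lagrangian moves must be arranged to fix a neighbourhood of that disc, which is where the explicit constructions behind Theorems \ref{cL} and \ref{cS} in Section \ref{lmg} will be needed.
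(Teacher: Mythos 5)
Your Steps 1 and 2 reproduce the paper's first step almost verbatim: take any smooth isotopy $\psi_s$, observe via Theorem \ref{L0L1} that the Maslov defect lies in $4\cdot H^1(T,\Z)$, hence is the monodromy of some $g\in\cX_T=\cS(T)$, realize $g^{-1}$ by a smooth self-isotopy of $T$, and concatenate. That part is fine.

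Step 3, however, has a genuine gap, and in two ways. First, your proposed mechanism for the reflection factor --- a move supported in a small ball meeting $T$ in the disc $D$, with the isotopy ``the identity'' on $T\setminus D$ --- cannot work: a diffeomorphism of $T$ that is the identity outside an embedded disc acts trivially on $H_1(T,\Z)$, so its monodromy is $e$, not $\bar r_1$. (Indeed the paper's realization of $\bar r_1$ in Proposition \ref{mcg} is supported in a solid torus $B^3\times S^1$ around a core circle of $T$, not in a ball.) Second, and more fundamentally, you have misread what the ``moreover'' clause demands. It is not enough to make the \emph{correcting self-isotopy} Lagrangian off a disc: the condition is that $\phi_s(T\setminus D)$ be Lagrangian for \emph{all} $s\in[0,1]$, and the segment of $\phi_s$ coming from $\psi_s$ passes through tori $\psi_s(T)$ that are in general Lagrangian nowhere. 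So your closing remark that ``$\psi_s$ itself need not be modified'' is exactly wrong; the whole family must be deformed. The paper does this in Lemma \ref{x}: choose a one-skeleton $\gamma\cup\gamma'$ of $T$ meeting in one point, track the $m$-framings of the symplectic normal bundles of $\gamma_s=\phi_s(\gamma)$, and use the hypothesis $\phi_1^*\mu_L=\mu_T$ (which makes the framings match at $s=0$ and $s=1$, so the family of differentials defines a map $I^2/\partial I^2\to A$) together with $\pi_2(SO(3))=0$ to homotope the differentials of $\phi_s$ along $\gamma_s$ to make $\phi_s(T)$ Lagrangian near $\gamma_s$, then repeat for $\gamma'$; the complement of a neighborhood of $\gamma\cup\gamma'$ in $T$ is the disc $D$. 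This framing/homotopy argument is the essential content of the second half of the proposition and is entirely absent from your proposal.
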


However, at 
the present stage we do not know how to improve $\phi_s(T)$ to a genuine Lagrangian isotopy between $T$ and $L$. To achieve the goal, it seems necessary 
(and perhaps enough) to have a better understanding of  the isotopy of Lagrangian discs with prescribed boundary conditions. 

We remark here that in \cite{Mo} K. Mohnke showed that all  
embedded Lagrangian tori in $\R^4$ are smoothly isotopic to 
a Clifford torus. Also in his thesis \cite{I} A. Ivrii showed that any 
embedded Lagrangian torus in $\R^4$ is Lagrangian isotopic 
to a Clifford torus. Both of them used pseudoholomorphic curve 
techniques \cite{G} and methods of symplectic field
theory \cite{EGH, BEHWZ}. 

This article is organized as follows. In Section \ref{maslink} we  
review necessary background on Maslov class and the linking 
class. In Section \ref{loop} we discuss the framings of the symplectic normal bundle of a loop in $\R^4$, and the change of 
framings under diffeomorphisms. Theorem \ref{cL} is proved in 
Section \ref{lmg}. Theorem \ref{L0L1} is proved in the beginning 
of Section \ref{smg}, followed by the proof of Theorem \ref{cS} 
which consists of Propositions \ref{mcg}-\ref{cX}. 
Proposition \ref{StoL} is proved in Section \ref{stol}. 
We will use the convention $S^1\cong \R/2\pi \Z$ throughout the paper. 

\section{Maslov class and linking class} \label{maslink}

As we are concerned with monodromies of self-isotopies of a Lagrangian 
torus, we should discuss at first two relevant  classes in $H^1(L,\Z)$:  the {\em Maslov class} $\mu=\mu_L$ (see \cite{MS} for more details) and the 
{\em linking class} $\ell=\ell_L$. The latter is defined (and denoted by $\sigma$) in \cite{EP2}. 

\vspace{.1in} 
\noindent 
{\bf Maslov class.} \  
The Maslov class $\mu$ is defined as follows. Given $\gamma\in H_1(L,\Z)$ 
and let $C\subset L$ be an immersed curve representing $\gamma$, the tangent bundle $T_CL$ over $C$ is a closed path of Lagrangian planes and hence 
a cycle in the Grassmannian of Lagrangian planes in the symplectic vector space $\R^4$. Then $\mu(\gamma)$ is 
defined to be the {\em Maslov index} of the cycle $T_CL$. 

\begin{theo}[\cite{P}] 
The Maslov class $\mu$ of a Lagrangian torus $L\subset \R^4$ is nontrivial and is of divisibility two. 
\end{theo}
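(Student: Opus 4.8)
**Proof plan for Theorem (the Maslov class $\mu$ of a Lagrangian torus $L \subset \mathbb{R}^4$ is nontrivial and of divisibility two).**

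The plan is to combine two ingredients: a topological computation of the Maslov class for an arbitrary orientable Lagrangian submanifold, and Gromov's non-exactness result quoted in the introduction. First I would establish the divisibility-two statement, which is purely linear-algebraic/topological and holds for any oriented Lagrangian $L^n \subset \mathbb{R}^{2n}$. The key point is that $\mu$ factors through a universal class: the Maslov index of a loop of Lagrangian planes is the degree of the composed map $S^1 \to \Lambda(n) \to U(n)/O(n) \xrightarrow{\det^2} S^1$, and the appearance of the \emph{square} of the determinant is exactly the source of the factor $2$. More precisely, for an oriented Lagrangian $L$ one has a well-defined Gauss-type map into the oriented Lagrangian Grassmannian $\Lambda^+(n) \simeq U(n)/SO(n)$, whose fundamental group is $\mathbb{Z}$ and for which the Maslov class is $2$ times the generator of $H^1$; pulling back along the Gauss map shows $\mu \in 2 \cdot H^1(L,\mathbb{Z})$. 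I would phrase this via the mod-2 statement that $\mu \equiv w_1(L) = 0$ is too weak — instead I would use that the first Chern class of the complexified tangent bundle $TL \otimes \mathbb{C} \cong T\mathbb{R}^{2n}|_L$ restricted to $L$ is torsion-trivial and that $\mu$ is, up to sign, the pullback of $2c_1$ of the determinant line of the Lagrangian Grassmannian.

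Second, to see that $\mu \neq 0$ when $L$ is a Lagrangian \emph{torus} in $\mathbb{R}^4$, I would argue by contradiction using Gromov's theorem (\cite{G}, as recalled in the introduction): if $\mu_L = 0$ then one can run a Floer-theoretic / holomorphic-disc argument showing $L$ bounds a holomorphic disc of Maslov index $2$, which (combined with monotonicity considerations for a torus, or directly with the maximum principle and the fact that $\omega$ is exact on $\mathbb{R}^4$) forces $L$ to be exact, contradicting Gromov. Alternatively, and more in the spirit of a self-contained note, I would invoke Polterovich's result \cite{P} directly — but since the excerpt attributes the theorem to \cite{P}, the intended proof is presumably to cite that paper for the non-vanishing and only reprove (or recall) the divisibility. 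So the cleaner structure is: recall that divisibility two is a general feature of oriented Lagrangians (the $\det^2$ computation), then cite \cite{P} (or sketch the holomorphic-disc argument) for non-triviality in the torus case in $\mathbb{R}^4$.

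The step I expect to be the main obstacle is making the non-vanishing argument genuinely rigorous without simply black-boxing \cite{P}: one must rule out $\mu_L = 0$, and the only known routes go through pseudoholomorphic curves (Gromov compactness, the existence of a disc with boundary on $L$ through a generic point, and an index/positivity computation showing such a disc has positive symplectic area and hence $L$ cannot be exact). Carrying this out carefully requires the machinery of \cite{G} — moduli spaces of discs, automatic transversality or a generic almost complex structure, and bubbling analysis — which is exactly why the theorem is cited rather than reproved here. In the write-up I would therefore present the divisibility computation in detail and defer the non-vanishing to \cite{P}, noting that for a torus in $\mathbb{R}^4$ exactness is obstructed by Gromov so $\mu \not\equiv 0$ follows from the general principle that an exact-on-$L$ Maslov-zero Lagrangian would contradict the holomorphic-disc count.
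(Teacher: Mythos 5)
The paper offers no proof of this statement: it is quoted verbatim from Polterovich \cite{P}, so the intended ``proof'' is the citation itself, and your decision to defer the analytic input to \cite{P} is consistent with that. However, the self-contained part of your plan does not prove the statement in the form the paper actually uses. You establish (a) $\mu\in 2\cdot H^1(L,\Z)$, via orientability and the $\det^2$ (equivalently $U(n)/SO(n)\to U(n)/O(n)$) computation, and (b) $\mu\neq 0$. Together these only say $\mu=2\alpha$ with $\alpha\neq 0$; they do not rule out $\alpha$ being a nontrivial multiple of a primitive class, i.e.\ $\mu$ having divisibility $4,6,\dots$. ``Divisibility two'' here means that $\alpha$ is primitive, and the paper relies on this stronger statement throughout (for instance the choice of a basis $\{\gamma_1,\gamma_2\}$ of $H_1(L,\Z)$ with $\mu(\gamma_1)=\mu(\gamma_2)=2$ for an arbitrary Lagrangian torus in the corollaries to Theorem~\ref{L0L1}, and the explicit matrix descriptions of $G_\mu$ and $\cX$). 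The primitivity is exactly the content of Polterovich's pseudoholomorphic-disc argument: Gromov's construction yields a non-constant holomorphic disc with boundary on $L$, and the index/positivity analysis in \cite{P} produces one whose boundary class $\beta$ satisfies $\mu(\beta)=2$, whence $\alpha(\beta)=1$ and $\alpha$ is primitive. This cannot be extracted from evenness plus non-exactness alone, so the honest division of labor is: prove evenness yourself if you like, but cite \cite{P} for the full divisibility-two claim, not merely for non-vanishing.

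A secondary point: your contradiction sketch is internally inconsistent --- under the hypothesis $\mu_L=0$ every disc with boundary on $L$ has Maslov index $0$, so no argument can produce ``a holomorphic disc of Maslov index $2$'' from that hypothesis. The correct logical order is: Gromov gives a non-constant holomorphic disc of positive area with boundary on $L$ (this already shows $L$ is not exact), and Polterovich's refinement locates such a disc with Maslov index exactly $2$, which yields both $\mu\neq 0$ and the primitivity of $\mu/2$.
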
 

\begin{exam} 
{\rm 
Consider   a {\em Clifford torus}  
\[ 
T=T_{a,b}:=\{ (ae^{ i t_1},be^{i t_2})\in \C^2\mid t_1,t_2\in S^1\cong\R/2\pi \Z\}.  
\] 
Let $\gamma_1\in H_1(T,\Z)$ be the class represented by the curve 
$\{ (ae^{ i t_1},b)\in \C^2\mid t_1\in \R/2\pi\Z\}$, and 
$\gamma_2\in H_1(T,\Z)$  the class represented by 
$\{ (a,be^{i t_2})\in \C^2\mid t_2\in \R/2\pi\Z\}$. 
Then $\mu_T(\gamma_1)=2=\mu_T(\gamma_2)$. 
} 
\end{exam} 

That $\mu_L\neq 0$ implies that the Lagrangian monodromy group $\cL(L)$ can only be a proper subgroup of 
$\text{Isom}(H_1(L,\Z))\cong GL(2,\Z)$. 

\vspace{.1in} 
\noindent 
{\bf Linking class.} \  
The linking class $\ell=\ell_L\in H^1(L,\Z)$ is defined as follows. Take $v$ to be any 
non-vanishing vector field on $L$ which is {\em homotopically trivial}, i.e., $v$ is  homotopic to some $v'$ 
in the space of non-vanishing vector fields on $L$, such that $v'$  generates the  kernel of a non-vanishing closed 1-form on $L$. 
 Let $J$ be an $\w$-compatible almost complex structure on $\R^4$.  Then 
$\ell (\gamma):=lk (C+\epsilon Jv, L)$ is defined to be the linking number with $L$ of 
the push-off of $C$ in the direction of $Jv$, where $C\subset L$ is an immersed curve representing  the class $\gamma$. 

The class $\ell$ is independent of the choices involved. That 
$\ell(\gamma)$ is independent of $J$ can be seen as follows. 
First of all, the space of $\w$-compatible almost complex 
structures is contractible and, since $L$ is Lagrangian, 
$Jv$ is transversal to $L$ for any $\w$-compatible $J$. 
So in particular we can take $J$ to be $J_0$ the standard 
complex structure on $\R^4$. Secondly, 
the independence of $v$ 
comes from the observation that 
vector fields generating the kernels of non-vanishing closed 
1-forms on $L$ are homotopic as nowhere vanishing vector 
fields. Finally, if $C,C'$ are two representatives of $\gamma$ 
then, since $H_1(L)$ is abelian, $C$ and $C'$ are free 
homotopic. Hence $\ell(\gamma)$ is independent of the 
choices of $v,J$ and $C$ with the prescribed conditions.

\begin{exam} 
{\rm Let $C\subset L$ be an embedded closed curve representing a nontrivial class $\gamma\in H_1(L,\Z)$. Parameterize $C$ by 
$t\in  S^1\cong \R/2\pi\Z$ so 
that its tangent vector field $\dot{C}(t)$ is non-vanishing. Then 
$\dot{C}(t)$ extends to a homotopically trivial vector field $v$ on 
$L$. For example, we can view $L$ as an $S^1$ bundle over 
$S^1$ with fibers representing the class $[C]\in H_1(L,\Z)$, and $C$ is one of the fibers. then take $v$ to be a non-vanishing 
vector field tangent to the fibers. 
}
\end{exam}

\begin{theo}[\cite{EP2}] 
The linking class $\ell_L=0$ for 
any embedded Lagrangian torus $L\subset \R^4$. 
\end{theo}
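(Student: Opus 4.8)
I plan to exploit that $\R^4$ is exact: fix a primitive $\lambda$ with $d\lambda=\omega$, let $Z$ be its Liouville vector field ($\iota_Z\omega=\lambda$), and use the descending flow of $Z$ to manufacture a $3$-chain bounding $L$ whose ``collar framing'' turns out to be exactly the normal framing that defines the linking class. Reductions first: since $\ell_L\in H^1(L,\Z)=\mathrm{Hom}(H_1(L,\Z),\Z)$ it suffices to prove $\ell_L(\gamma)=0$ for $\gamma$ in a basis of $H_1(L,\Z)$, so fix a primitive $\gamma$ with an embedded representative $C\subset L$. Translating $L$ by a symplectomorphism of $\R^4$ (which preserves being Lagrangian and, by the stated diffeomorphism-invariance of $\ell$, preserves $\ell_L$) we may assume $0\notin L$. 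Recall $\ell_L(\gamma)=lk(C+\epsilon J_0 v,L)$ for \emph{any} homotopically trivial nowhere-zero vector field $v$ on $L$, so we are free to choose $v$ conveniently.

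Now choose $\lambda$ well. By Gromov's non-exactness $[\iota^*\lambda]\ne0$ in $H^1(L,\R)$, and every nonzero class on $T^2$ has a nowhere-zero closed representative (a constant-coefficient form in affine coordinates); since $H^1(\R^4)=0$, $\lambda$ is unique up to adding $df$ with $f\in C^\infty(\R^4)$, so we may assume $\iota^*\lambda$ itself is nowhere zero. Because $L$ is Lagrangian, for $X\in T_pL$ one has $\omega(Z,X)=\omega(Z^N,X)=\iota^*\lambda(X)$, where $Z^N$ is the component of $Z|_L$ normal to $L$; hence $Z^N$ is nowhere zero, i.e.\ $Z$ is everywhere transverse to $L$. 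Take $v$ to span $\ker(\iota^*\lambda)$ (homotopically trivial by definition). With $g=\omega(\cdot,J_0\cdot)$ one computes $\omega(J_0v,X)=-g(v,X)$ for $X\in T_pL$, so under the identification $\nu L\cong T^*L$, $Y\mapsto\iota_Y\omega|_{TL}$, the normal field $J_0v$ corresponds to $-v^\flat$ while $Z^N$ corresponds to $\iota^*\lambda$. A short argument then gives $J_0v\simeq Z^N$ through nowhere-zero sections of $\nu L$: metric duality $TL\to T^*L$ is homotopic to affine duality (the Gram matrices lie in the convex set of positive symmetric matrices), so $v^\flat$ has the same winding class as $v=\ker(\iota^*\lambda)$, which differs from $\iota^*\lambda$ only by a constant rotation in affine coordinates, and $-w\simeq w$. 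Since the linking number of a push-off depends only on the homotopy class of its framing, $\ell_L(\gamma)=lk(C+\epsilon Z^N,L)$.

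Finally, realize $Z^N$ as a Seifert framing. As $Z$ is transverse to $L$, its descending flow $\Phi^Z_{-t}$ gives an embedded collar $\Psi\colon L\times[0,T]\hookrightarrow\R^4$, $\Psi(q,t)=\Phi^Z_{-t}(q)$, with $\Psi(L\times\{0\})=L$ and collar direction $Z|_L$ (normal part $Z^N$). The closure $W$ of the full descending orbit $\{\Phi^Z_{-t}(q):q\in L,\ t\ge0\}$ is a compact singular $3$-chain with $\partial W=L$ (it accumulates at the single point $0$, which contributes nothing to the boundary). For small $\epsilon$ the push-off $C+\epsilon Z^N$ is isotopic in $\R^4\setminus L$ to $\Phi^Z_{-\epsilon}(C)\subset W$, and the latter bounds a $2$-chain inside $W$ minus the thin collar $\Psi(L\times[0,\epsilon))$ — a set homeomorphic to a cone on $L$, hence contractible. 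Provided this $2$-chain can be taken disjoint from $L$, we get $lk(C+\epsilon Z^N,L)=0$, and therefore $\ell_L=0$.

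The main obstacle is exactly that last proviso: one must arrange that the descending cone $W$ (or a substitute for it) meets $L$ only along $\partial W=L$, so that the bounding $2$-chain sits in $\R^4\setminus L$. Transversality of $Z$ to $L$ controls the collar near $L$, but $W$ may re-intersect $L$ far away; dealing with this — e.g.\ by a further adjustment of $\lambda$ making $L$ disjoint from its nontrivial $\Phi^Z$-images, or by replacing a deep slice of $W$ with an auxiliary $3$-chain (available since $H_2(\R^4)=0$) and perturbing the push-off of $C$ to avoid it — is where the genuine care lies; everything else (Gromov's theorem, the identification $J_0v\simeq Z^N$, and ``$lk$ of a push-off depends only on its framing'') is routine. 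Alternatively one can sidestep the bookkeeping entirely: $\ell$ is invariant under Lagrangian isotopy — pushing $(J_0,v)$ forward by an ambient symplectic isotopy again yields an admissible pair, so by the stated independence of choices $\ell_{L_s}\circ(\phi_s)_*=\ell_L$ — whence, by Ivrii's theorem, $\ell_L$ is the pullback of $\ell_T$ for a Clifford torus $T=T_{a,b}$, and $\ell_T=0$ is an explicit computation: for $C=\{(ae^{it_1},b)\}$ one has $J_0v=J_0(iae^{it_1},0)=(-ae^{it_1},0)$, the push-off is $\{((1-\epsilon)ae^{it_1},b)\}$, which bounds the flat disc $\{(z,b):|z|\le(1-\epsilon)a\}$ disjoint from $T$, so $\ell_T(\gamma_1)=0$, and symmetrically $\ell_T(\gamma_2)=0$.
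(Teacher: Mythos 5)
First, note that the paper does not prove this statement at all: it is quoted verbatim from Eliashberg--Polterovich \cite{EP2}, whose actual argument runs through Luttinger surgery along $L$ combined with Gromov's theorem that a symplectic structure on $\R^4$ standard at infinity is standard. So any proof you give is necessarily ``different from the paper's''; the question is only whether yours closes.

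Your main route (the Liouville cone) does not close, and the gap is not the bookkeeping item you relegate to the last paragraph --- it is the theorem. Two problems. (a) After replacing $\lambda$ by $\lambda+df$ to make $\iota^*\lambda$ nowhere zero, the Liouville field $Z$ is no longer the radial field: its backward flow need not converge to a single point, may have other zeros or complicated limit sets, so the ``closure of the descending orbit of $L$'' is not known to be a compact $3$-chain with boundary $L$. You cannot have it both ways: the standard radial $Z_0$ gives the cone but may fail to be transverse to $L$ exactly where $\lambda_0|_L$ vanishes, while the corrected $Z$ is transverse to $L$ but has uncontrolled global dynamics. (b) Even granting a cone $W$ with $\partial W=L$ and collar direction $Z^N$, the requirement that $W$ meet $L$ only along $\partial W$ is essentially a restatement of $\ell_L(\gamma)=0$ for all $\gamma$ (the existence of a Seifert $3$-chain inducing the given framing), so assuming it is circular, and neither of your proposed fixes works: there is no reason one can choose $\lambda$ so that $L$ avoids its nontrivial $\Phi^Z$-images, and swapping a deep slice of $W$ for an arbitrary $3$-chain changes the intersection count with $L$ by exactly the quantity you are trying to control. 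The pointwise identifications ($Z$ transverse to $L$ where $\iota^*\lambda\neq 0$, and $J_0v\simeq Z^N$ in $\nu L$) are fine; the global topology is where the pseudoholomorphic-curve input of \cite{EP2} enters, and it cannot be replaced by this soft argument.

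Your fallback route is logically sound but disproportionate: it derives the 1994 result of \cite{EP2} from Ivrii's 2003 Lagrangian unknottedness theorem \cite{I}, a much deeper statement. Within this paper (which cites \cite{I}) that is not circular, and the two ingredients you need do hold: the Clifford computation is correct ($v=\partial_{t_1}$ spans $\ker dt_2$, the push-off $\{((1-\epsilon)ae^{it_1},b)\}$ bounds a flat disc disjoint from $T$, and symmetrically for $\gamma_2$), and $\ell$ is invariant under Lagrangian isotopy --- though not for the reason you give (a Lagrangian isotopy need not extend to an ambient symplectic isotopy); the correct argument is that for a continuous family $(L_s, v_s, J_s)$ of admissible data the integer $lk(C_s+\epsilon J_sv_s,L_s)$ is locally constant in $s$, hence constant. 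If you intend the fallback as your actual proof, state it that way and drop the Liouville construction; as written, the proof you present as primary has an unfixable hole.
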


\begin{rem} \label{ellJ}
{\rm Given an embedded torus $L\subset \R^4$ we consider the set $\cJ^+(L)$ 
of almost complex structures $J$ defined on $T_L\R^4$ 
such that $J(TL)\pitchfork TL$ and $J$ is compatible with the orientation of $\R^4$. 
The homotopic classes of such $J$ is isomorphic to $H^1(L,\Z)\cong \Z^2$. 
Similar to $\ell_L$ for $L$ being Lagrangian, each $J$ associates a linking class 
$\ell_L(J)\in H^1(L,\Z)$ defined by linking numbers 
$\ell_L(J)(\gamma):=lk (C+\epsilon Jv, L)$, where $C,v$ are as defined above.

Then  $\ell_L(J_0)=\ell_L=0$ if $L$ is Lagrangian and $J_0$ is the standard 
complex structure (or any $\w$-compatible one.) It will be shown later that 
the vanishing of $\ell$ implies for $\phi_s$ as in 
Theorem \ref{L0L1}, $((\phi_1)_*J_0)|_{L_1}$ and $J_0|_{L_1}$ are homotopic in $\cJ^+(L_1)$. Hence 
for any embedded oriented closed curve $C\subset L_0$, 
$(\phi_1)_*N^\w_C=N^\w_{\phi_1(C)}$ 
up to a smooth isotopy rel $L_1$, 
which leads to Theorem \ref{L0L1}. 
Here $N^\w_C$ is the symplectic normal bundle as defined in 
the beginning of the next section. 
} 
\end{rem}

\section{Loops in $\R^4$ and their framings} \label{loop}

Before moving on to Lagrangian tori in $\R^4$, it helps to have a closer look at loops in $\R^4$ at first. 

 A loop in $\R^4$ is an embedded  1-dimensional submanifold diffeomorphic to $S^1$. The pull-back of $\w$ on a loop vanishes, hence a loop is an isotropic submanifold. 
Take a loop $C\subset \R^4$. We fix an orientation of $C$ and fix a trivialization 
of $C\cong S^1=\R/2\pi\Z$, and write $\dot{C}(t)$ for the tangent vector of $C$ at $C(t)$. 

\vspace{.2in} 
\noindent 
{\bf Symplectic normal bundle.} \ 
Let us recall some basic properties of the normal bundle $N$ of $C$. 
The  bundle $N$  splits as 
\[ 
N= (T^*C)\oplus N^\w, 
\] 
where $N^\w$,  called the {\em symplectic normal} bundle of $C$,  is the  
trivial $\R^2$-bundle over $C$ defined by 
\[ 
N^\w:=\{ (C(t), v)\mid t\in S^1, \  v\in N|_{C(t)}, \ \w(\dot{C}(t),v)=0\}. 
\]  
By Weinstein's isotropic neighborhood theorem (see \cite{W1,W2,MS}), there exists a tubular neighborhood $U\subset \R^4$ of $C$, a tubular neighborhood $V\subset N$ of the zero section of the normal bundle $C\subset\R^4$, and a symplectomorphism with $C\subset U$ identified with the zero 
section of $N$: 
\[ 
(U\subset \R^4,\w)\to (V\subset N=T^*C\times \R^2, \w_C\times \wc).  
\] 
Here 
$\wc=dx\wedge dy$ is the standard symplectic 2-form on $\R^2$, $\w_C=dt\wedge dt^*$ is the canonical symplectic 2-form on $T^*C$, and 
$t^*$ is the fiber coordinate of $T^*C$ dual to $t$. The symplectic normal bundle 
 $N^\w$ is identified with $\{ (t,0,x,y)\in S^1\times \R\times \R^2\}$.

Below we explore some properties of $N^\w$ that will be applied in 
later sections. 

\vspace{.2in} 
\noindent 
{\bf Lagrangian tori associated to a loop.} \ 
Let 
\[ 
D^\w\subset N^\w
\] 
denote the associated symplectic normal disc bundle with fiber an open disc $\{ (x,y)\in \R^2, \ x^2+y^2<\epsilon\}$ with some 
positive radius $\epsilon$. With the symplectomorphism near $C$ described as above, 
the boundary $L=L_C:=\partial D^\w$ is a an embedded Lagrangian torus in $\R^4$ provided that $\epsilon>0$ is small enough. Note that for each $\epsilon$ small enough, $L_C$ 
with $\epsilon>0$ fixed is unique up to 
a Hamiltonian isotopy.

It is well known that any two loops in $\R^4$ are smoothly isotopic. 
The following proposition can be easily verified. 

\begin{prop}   \label{Ds}   
Let $C_s$, $0\leq s\leq 1$, be a smooth isotopy of loops. Let $D^\w_s$ denote the symplectic 
normal disc bundle of $C_s$ with fiber radius $\epsilon_s>0$. Let $L_s:=\partial D^\w_s$. Then there exists an $\epsilon>0$ such  
that $L_s$ is a Lagrangian isotopy of embedded Lagrangian tori provided that $0<\epsilon_s<\epsilon$. In particular, if $C_0=C_1$ as a set and $\epsilon_0=\epsilon_1$, then 
 $D^\w_0=D^\w_1$ and we get a Lagrangian self-isotopy of $L_0=\partial D^\w_0$. 
\end{prop}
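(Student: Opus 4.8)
The plan is to reduce everything to the local normal form near the loop and then glue in a Moser-type argument. First I would recall from the previous paragraph that for each loop $C$ and each sufficiently small radius $\e$, Weinstein's isotropic neighborhood theorem gives a symplectomorphism from a tubular neighborhood $U$ of $C$ in $(\R^4,\w)$ to a neighborhood $V$ of the zero section in $(N = T^*C\times\R^2, \w_C\times\wc)$, under which $D^\w$ becomes the standard disc bundle $\{(t,0,x,y) : x^2+y^2<\e^2\}$ and $L_C=\partial D^\w$ becomes the product torus $S^1\times\{0\}\times\{x^2+y^2=\e^2\}$, which is manifestly Lagrangian because $\wc$ vanishes on the circle $\{x^2+y^2=\e^2\}$ and $\w_C$ vanishes on $S^1\times\{0\}$. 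So the one-loop case is immediate.

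For the family $C_s$, I would first apply the parametrized version of Weinstein's theorem: a smooth isotopy of loops $C_s$ extends to a smooth isotopy $\psi_s\in\Diff_0^c(\R^4)$ with $\psi_0=\mathrm{id}$ and $\psi_s(C_0)=C_s$, and after composing with a further family of symplectomorphisms one obtains, on a fixed tubular neighborhood, symplectomorphisms $\Phi_s$ carrying $(U_s\subset\R^4,\w)$ to the standard model $(V\subset N,\w_C\times\wc)$ with a uniform size. Concretely: the normal bundle data $N^\w_{C_s}$ varies smoothly, all these rank-2 symplectic bundles over $S^1$ are trivial, and one can choose the trivializations to depend smoothly on $s$; feeding this into the proof of Weinstein's theorem (which is itself a Moser argument) yields the $\Phi_s$. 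Then there exists $\e>0$, independent of $s$ on the compact parameter interval $[0,1]$, such that for all $0<\e_s<\e$ the torus $\Phi_s^{-1}(S^1\times\{0\}\times\{x^2+y^2=\e_s^2\})$ is an embedded Lagrangian torus; pulling back by $\Phi_s^{-1}$ and patching with the identity outside $U$, one gets the desired Lagrangian isotopy $L_s$, with associated ambient maps in $\Diff_0^c(\R^4)$. Allowing the radius $\e_s$ to vary smoothly is harmless since rescaling the $(x,y)$-disc is a Lagrangian isotopy of the model torus inside the model.

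The special case $C_0=C_1$ as sets with $\e_0=\e_1$ then follows: the two disc bundles $D^\w_0$ and $D^\w_1$ are cut out by the same data, so $D^\w_0=D^\w_1$ and hence $L_0=\partial D^\w_0=\partial D^\w_1=L_1$ as sets, while the family $L_s$ constructed above is a loop of Lagrangian tori based at $L_0$, i.e., a Lagrangian self-isotopy.

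The main obstacle is the \emph{uniformity of $\e$ in $s$} together with the smooth dependence of the Weinstein charts $\Phi_s$ on the parameter. For a single loop the size of the Weinstein neighborhood is unproblematic, but along the isotopy the tubular neighborhoods $U_s$ can shrink and the trivializations of $N^\w_{C_s}$ must be chosen coherently; one has to invoke the parametrized Moser/Weinstein theorem carefully and use compactness of $[0,1]$ to extract a single $\e$ that works for every $s$. Once that is in place the Lagrangian condition is checked pointwise in the explicit model and the rest is bookkeeping, which is why the proposition is stated as "easily verified."
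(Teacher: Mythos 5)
Your argument is correct and is exactly the verification the paper has in mind: the paper gives no written proof (it states only that the proposition "can be easily verified" after setting up the Weinstein isotropic neighborhood model in the immediately preceding paragraphs), and your parametrized Weinstein/Moser argument, with the uniform $\epsilon$ extracted by compactness of $[0,1]$ and the Lagrangian condition checked on the model torus $S^1\times\{0\}\times\{x^2+y^2=\epsilon_s^2\}$, supplies precisely the missing details. The one point worth keeping in view is that, as the paper itself notes, the embedded disc bundle $D^\omega_s\subset\R^4$ depends on the choice of Weinstein chart and is canonical only up to Hamiltonian isotopy, so the assertion $D^\omega_0=D^\omega_1$ tacitly presupposes that the same identification is used at $s=0$ and $s=1$ --- which is what your phrase "cut out by the same data" is implicitly assuming.
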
 
In Section \ref{lmg} 
we will use this observation to construct Lagrangian self-isotopies of a Clifford torus.

\vspace{.2in} 
\noindent
{\bf Framings of $N^\w$.} \  
\begin{defn} 
{\rm 
To a non-vanishing 
section (i.e., a framing) $\sigma$ of $N^\w$ one can associate an $S^1$-family of  Lagrangian planes 
\[ 
\dot{C}(t)\wedge \sigma(t), \quad t\in S^1. 
\] 
We denote the corresponding 
Maslov index by 
\[ 
\mu_C(\sigma):=\mu(\dot{C}(t)\wedge \sigma(t))\in 2\Z. 
\] 
Note that $\mu_C(\sigma)$ depends only on the orientation of $C$ and the homotopy class of $\sigma$ among framings of $N^\w$.  
} 
\end{defn} 

If we fix a trivialization $\Phi:N^\w\to C\times \R^2=C\times \C^1$ then 
the homotopy classes of framings of $N^\w$  can be identified with $[S^1,\R^2\setminus \{ 0\}]=[S^1,S^1]=\Z$. Then for a map $\theta:S^1\to S^1$ of degree $m$, the Maslov index associated to the section $\sigma'(t):=e^{i\theta(t)}\sigma(t)$ is 
$\mu_C(\sigma')=\mu_C(\sigma)+2m$. In particular, there is a framing $\sigma^0$ of $N^\w$ such that $\mu_C(\sigma^0)=0$. 
We call $\sigma^0$  a {\em 0-framing} of $C$, it is unique up to homotopy.   Likewise, for each $m\in\Z$ there is a framing $\sigma^m$ of $N^\w$, $\sigma^m$ is unique up to homotopy, such that $\mu_C(\sigma^m)=2m$. 

\begin{defn} 
{\rm 
We call $\sigma^m$ an {\em $m$-framing} of $N^\w$ or, an 
{\em $m$-framing} of $C$.
}
\end{defn}  

Note that the homotopy classes of framings of $N^\w$ are classified by the framing number $\mu_C(\sigma)/2$. 

\begin{exam} 
{\rm Let $C\subset L$ be a simple closed curve representing the class $\gamma\in H_1(L,\Z)$ of a Lagrangian torus. Let $v$ be 
a non-vanishing  section of $N^\w_C\cap T_CL$. Then 
$v$ is a $\mu(\gamma)/2$-framing of $N^\w_C$. 
} 
\end{exam}

\begin{prop} \label{0frame}   
Let $C_s$, $s\in [0,1]$ be a smooth isotopy between loops $C_0$ and $C_1$. Write $C_s=\phi_s(C_0)$ where $\phi_s\in \Diff_0^c(\R^4)$ with $\phi_0=id$. 
Let $N^\w_s$ and $\sigma^m_s$ denote the symplectic normal bundle and the  $m$-framing of $C_s$ respectively. 
\begin{enumerate} 
\item Assume that $(\phi_1)_*N^\w_0=N^\w_1$. 
Then 
\[ 
\mu_{C_1}((\phi_1)_*\sigma^m_0)-\mu_{C_1}(\sigma^m_1)=
\mu_{C_1}((\phi_1)_*\sigma^0_0)-\mu_{C_1}(\sigma^0_1)\in 4\Z.  
\] 
\item If $\mu_{C_1}((\phi_1)_*\sigma^m_0)=\mu_{C_1}(\sigma^m_1)=2m$ then up to a perturbation of $\phi_s$ we may assume that $(\phi_s)_*N^\w_0=N^\w_s$ 
and $(\phi_s)_*\sigma^m_0=\sigma^m_s$. 
\end{enumerate} 
\end{prop}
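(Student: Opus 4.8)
The plan is to prove (i) by reducing to $m=0$ and then establishing divisibility by $4$, and to prove (ii) by a homotopy‑theoretic ``integration'' of the $m$‑framings along the isotopy.

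\textbf{Part (i): reduction to $m=0$, and the crux.} The first equality is a bookkeeping identity: $\sigma^m_0$ is $\sigma^0_0$ rotated through $m$ full turns in the (complex‑oriented) bundle $N^\w_{C_0}$, $\sigma^m_1$ is $\sigma^0_1$ rotated through $m$ turns in $N^\w_{C_1}$, and since $\phi_1$ is orientation preserving with $(\phi_1)_*N^\w_0=N^\w_1$ it carries the first relation to ``$(\phi_1)_*\sigma^m_0\simeq(\phi_1)_*\sigma^0_0$ rotated through $m$ turns in $N^\w_{C_1}$''; as each turn changes a Maslov index by $2$, both $\mu_{C_1}((\phi_1)_*\sigma^m_0)$ and $\mu_{C_1}(\sigma^m_1)$ increase by $2m$, so their difference is independent of $m$. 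Using $\mu_{C_1}(\sigma^0_1)=0$, it remains to show $\mu_{C_1}((\phi_1)_*\sigma^0_0)\in 4\Z$.

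\textbf{Part (i): divisibility by $4$.} Trivialize $T\R^4$ over each loop by the constant bundle $\C^2$, and set $\Lambda_0(t):=\dot C_0(t)\wedge\sigma^0_0(t)$, a loop of Lagrangian planes with $\mu(\Lambda_0)=0$. Since $(\phi_1)_*\sigma^0_0\in N^\w_{C_1}$ is $\w$‑orthogonal to $\dot C_1$, the loop $d\phi_1\cdot\Lambda_0(t)=\dot C_1(t)\wedge(\phi_1)_*\sigma^0_0(t)$ is again Lagrangian and $\mu_{C_1}((\phi_1)_*\sigma^0_0)=\mu(d\phi_1\cdot\Lambda_0)$. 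The crucial input is that $\phi_1\in\Diff_0^c(\R^4)$: choosing an isotopy $\phi_s$ with $\phi_0=\mathrm{id}$, the map $(s,t)\mapsto d\phi_s|_{C_0(t)}\in GL^+(4,\R)$ is a homotopy from the constant loop $I$ to $A:=d\phi_1|_{C_0}$, so $A$ is null‑homotopic in $GL^+(4,\R)$; in particular its class $[A]$ in $\pi_1(GL^+(4,\R))\cong\Z/2$ vanishes. I claim that for any loop $A$ in $GL^+(4,\R)$ and any loop of Lagrangian planes $\Lambda$ with $A(t)\Lambda(t)$ Lagrangian for every $t$, one has $\mu(A\Lambda)-\mu(\Lambda)\equiv 2[A]\pmod 4$; applied to $A,\Lambda_0$ this gives $\mu(d\phi_1\cdot\Lambda_0)\equiv 0\pmod 4$, as desired. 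To prove the claim I would take a null‑homotopy $A_r$ ($r\in[0,1]$) of $A$ with $A_1\equiv I$, and study the cylinder $(r,t)\mapsto A_r(t)\Lambda_0(t)$ inside the oriented Grassmannian $Gr^+(2,4)$, each plane oriented by pushing forward the orientation of $\Lambda_0(t)$. Its two boundary circles lie on the oriented Lagrangian Grassmannian $\Lambda^+(2)$, which is a co‑orientable codimension‑one submanifold of $Gr^+(2,4)$ (its normal line bundle is the inverse determinant of the tautological $2$‑plane bundle, oriented over $\Lambda^+(2)$); the difference of the boundary Maslov numbers is then the signed count of interior intersections of the cylinder with $\Lambda^+(2)$, and the point is that the parity of this count equals $[A]\in\Z/2$. \emph{This parity statement is the step I expect to be the main obstacle:} the mod‑$2$ assertion (that $\mu$ is an even integer) is automatic, so what is really needed is the mod‑$4$ refinement, which requires fixing the co‑orientations and signs carefully and identifying the resulting $\Z/2$ invariant with the $\pi_1(GL^+(4,\R))$‑obstruction. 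A possible alternative is to compute directly $\pi_1$ of the space of pairs $\{(A,\Lambda):A\Lambda\text{ Lagrangian everywhere}\}$, or to bring $\phi_1$ into a normal form supported near $C_0$ and compute $\mu_{C_1}((\phi_1)_*\sigma^0_0)$ by hand.

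\textbf{Part (ii).} Here the Maslov hypothesis is exactly what removes the obstruction to carrying the framing along. First perturb $\phi_s$, supported near $C_s$ and fixing each $C_s$, so that $(\phi_s)_*N^\w_0=N^\w_s$ for all $s$: at each $(t,s)$ the $2$‑plane $(\phi_s)_*N^\w_{C_0}$ is transverse to $\dot C_s$ and already equals $N^\w_{C_s}$ at $s=0,1$; one interpolates through such transverse $2$‑planes, the obstruction to doing this consistently in $s$ being detected by $\mu_{C_1}((\phi_1)_*\sigma^m_0)\bmod 4$, which is $2m$ by hypothesis together with part (i), hence the obstruction vanishes; the interpolation is realised by a family of diffeomorphisms in $\Diff_0^c(\R^4)$ with $\phi_0=\mathrm{id}$. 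After this correction $(\phi_s)_*\sigma^m_0$ is an honest framing of $N^\w_s$, so $s\mapsto\mu_{C_s}((\phi_s)_*\sigma^m_0)$ is integer valued and continuous, hence constant; it is $2m$ at $s=0$ and $2m$ at $s=1$, so it equals $2m$ throughout, i.e.\ $(\phi_s)_*\sigma^m_0$ is an $m$‑framing of $C_s$ for every $s$ and is therefore homotopic to $\sigma^m_s$. Finally compose $\phi_s$ near $C_s$ with a family of fiberwise rotations $R_s$ of $N^\w_s$, $R_0=\mathrm{id}$, carrying $(\phi_s)_*\sigma^m_0$ exactly onto $\sigma^m_s$; this is possible because the two are homotopic $m$‑framings, so their fiberwise ratio is a degree‑zero map $S^1\to S^1$ depending continuously on $s$. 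The only genuine care needed in (ii) is that all these corrections of the germ of $\phi_s$ along $C_s$ extend to elements of $\Diff_0^c(\R^4)$ and fit together smoothly in $s$, which is routine via the isotopy‑extension theorem; so I regard (ii) as straightforward once the bookkeeping of part (i) is in place.
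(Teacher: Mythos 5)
You correctly isolate the two structural ingredients --- the reduction to $m=0$, and the fact that the family $s\mapsto d\phi_s|_{C_0}$ null-homotopes the loop $t\mapsto d\phi_1|_{C_0(t)}$ in $GL^+(4,\R)$, whose fundamental group is $\Z/2$ --- and these are exactly the ingredients of the paper's proof. But the step you yourself flag as the main obstacle is a genuine gap, and worse, the lemma you propose to close it is false as stated. Take $\Lambda(t)\equiv\mathrm{span}(\partial_{x_1},\partial_{x_2})$ (a constant loop, so $\mu(\Lambda)=0$) and let $A(t)$ be the rotation by angle $t$ in that same plane, extended by the identity on $\mathrm{span}(\partial_{y_1},\partial_{y_2})$. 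Then $A(t)\Lambda(t)=\Lambda(t)$ is Lagrangian for every $t$ and $\mu(A\Lambda)-\mu(\Lambda)=0$, yet $[A]$ is the generator of $\pi_1(GL^+(4,\R))\cong\Z/2$, so $\mu(A\Lambda)-\mu(\Lambda)\not\equiv 2[A]\pmod 4$. Hence no amount of care with co-orientations in $Gr^+(2,4)$ can establish the congruence from the hypothesis ``$A\Lambda$ Lagrangian'' alone; the statement genuinely needs the extra rigidity present in your situation, namely that $d\phi_1$ carries $\dot C_0(t)$ to $\dot C_1(t)$ (so the first column of the matrix in the moving frames is $(1,0,0,0)^T$) and, by hypothesis, carries $N^\w_0$ to $N^\w_1$.

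That extra structure is precisely what the paper exploits, via the ``normal form computed by hand'' alternative you mention only in passing: in Weinstein coordinates $(t,t^*,x,y)$ adapted to $C_s$, one perturbs $\phi_s$ by diffeomorphisms fixing the curves pointwise until the loop $t\mapsto d\phi_1|_{C_0(t)}$ takes the block form $\mathrm{diag}(1,\pm1)\oplus R(kt)$, where $R(kt)$ is the rotation by $kt$ acting on the $N^\w$-factor. Then $\mu_{C_1}((\phi_1)_*\sigma^0_0)=2k$ by inspection, while the triviality of this loop in $\pi_1(GL^+(4,\R))\cong\pi_1(GL^+(3,\R))\cong\Z_2$ forces $k$ to be even, giving $2k\in4\Z$. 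This route also yields part (ii) essentially for free: when the Maslov indices match one gets $k=0$, and the already-perturbed family $\phi_s$ carries $N^\w_0$ and $\sigma^m_0$ onto $N^\w_s$ and $\sigma^m_s$. By contrast, your argument for (ii) rests on an unproved assertion that ``the obstruction is detected by $\mu\bmod 4$,'' which is the same unestablished point in different clothing. So the proposal has the right topological input but does not close the central step, and its proposed replacement lemma is not true in the generality claimed.
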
 

\begin{proof} 
(i).  First consider the case $m=0$. 
Fix a trivialization $S^1\cong \R/2\pi\Z\to C_0$ for $C_0$. This trivialization 
composed with $\phi_s$ becomes a trivialization of $C_s$. 
Applying Weinstein's isotropic neighborhood theorem we may symplectically identify a neighborhood of $C_s\in \R^4$ with a neighborhood of the zero section of the normal bundle $N_s$ of 
$C_s$. We can trivialize $N_s=S^1\times\R\times \R^2$ with coordinates $(t,t^*,x,y)$ so that 
\begin{itemize} 
\item $C_s=S^1\times\{ 0\} \times \{ 0\}$, 
\item  $N^\w_s=S^1\times \{ 0\} \times \R^2$, and 
\item $\sigma^0_s(t)=(t,0,\epsilon,0)$, for some $\epsilon >0$.  
\end{itemize} 

Then for each $s$,  the differential $(\phi_s)_*(t)$ at $C_0(t)$ with $t\in S^1\cong \R/2\pi\Z$ can be thought of as a smooth loop in $GL^+(4,\R)$: 
\[ 
(\phi_s)_*(t)\in \begin{pmatrix} 1 &  * \\ 0 & GL^+(3,\R) \end{pmatrix}, \quad 
(\phi_0)_*(t)=Id, \quad (\phi_1)_*(t)\in \begin{pmatrix} 1 & * & 0 \\
0 & c(t) & 0\\ 0 & * & GL(2,\R) \end{pmatrix} . 
\] 
Note that because $(\phi_1)_*(t)$ is an isomorphism, $c(t)\neq 0$ for $t\in S^1$. 
We view $(\phi_0)_*(t)=id$ as a constant loop in $GL^+(4,\R)$ parameterized by $t$. Then $(\phi_s)_*(t)$, $0\leq s\leq 1$, viewed as a family of parameterized loops in $GL^+(4,\R)$, is a free homotopy between $(\phi_0)_*(t)$ and 
$(\phi_1)_*(t)$. This implies that 
$(\phi_1)_*(t)$ is free homotopic to the trivial class of $\pi_1(GL^+(4,\R))\cong \pi_1(GL^+(3,\R))=\Z_2$.

Since the lower $3\times 3$ block of the matrix form of 
$(\phi_s)_*(t)$ is invertible, 
we can perturb $\phi_s$ by composing it with some suitable family of maps in $\text{Diff}^c_0(\R^4)$, each of them fixing $C_s$ pointwise and with the condition $(\phi_1)_*N^\w_0=N^\w_1$ preserved under the perturbation, so  that the perturbed $\phi_s$ satisfy  
\[ 
(\phi_s)_*(t)\in \begin{pmatrix} 1 &  0 \\ 0 & GL^+(3,\R) \end{pmatrix}, \quad with 
\quad (\phi_0)_*(t)=Id,
\] 
and either $(\phi_1)_*(t)=A(t)$ or $(\phi_1)_*(t)=A'(t)$, where 
\begin{equation} \label{AA'}
A(t)= \begin{pmatrix} 1 & 0 & 0 & 0  \\
0 & 1 & 0 & 0 \\ 0 & 0 & \cos kt & -\sin kt \\ 
0 & 0 & \sin kt & \cos kt \end{pmatrix}, \quad 
A'(t)=\begin{pmatrix} 1 & 0 & 0 & 0  \\
0 & -1 & 0 & 0 \\ 0 & 0 & \cos kt & \sin kt \\ 
0 & 0 & \sin kt & -\cos kt \end{pmatrix} 
\end{equation}  
for some $k\in\Z$. Note that $A'(t)$ is free homotopic to $A(t)$ by a $180^\circ$ 
rotation along the subspace spanned by its second and third column vectors. 
We can interchange the two cases $(\phi_1)_*(t)=A(t)$ and $(\phi_1)_*(t)=A'(t)$
by composing with $\phi_1$ such a rotation along $C_1$.

Now that $[ (\phi_1)_*(t)]=0$ in $\pi_1(GL^+(4,\R))$ implies that $k\in 2\Z$. 
Hence $\mu_{C_1}((\phi_1)_*\sigma^0_0)=2k+\mu(\sigma^0_1)=2k\in 4\Z$. 

The equality $\mu_{C_1}((\phi_1)_*\sigma^m_0)-\mu_{C_1}(\sigma^m_1)=\mu_{C_1}((\phi_1)_*\sigma^0_0)-\mu_{C_1}(\sigma^0_1)$ follows from 
the property that $\sigma^m_s(t)=e^{imt}\sigma^0_s(t)$ up to 
homotopy.

(ii). The proof follows from the perturbation of $\phi_s$ constructed in (i). 

\end{proof}

\section{Lagrangian monodromy group (LMG) of a  Clifford torus} \label{lmg} 

In general, the LMG $\cL(L)$ has to preserve both the 
Maslov class $\mu_L$ and the linking class $\ell_L$ whenever 
defined. However, for $L\subset \R^4$ the class $\ell_L=0$ is 
automatically preserved. In this section we will determine  the 
LMG of a Clifford torus in $\R^4$. 

Identify $\R^4\cong \C^2$. 
For $a,b>0$ the {\em Clifford torus} $T_{a,b}$ is defined to be 
\[ 
T=T_{a,b}:=\{ (z_1,z_2)\mid |z_1|=a, \ |z_2|=b\}. 
\] 
We fix a basis$\{ \gamma_1,\gamma_2\}$ of $H_1(T,\Z)$ so that 
\[ 
\text{ $\gamma_1$ is represented by the cycle $\{ (ae^{i t},b)\mid t\in \R/2\pi\Z\}$,} 
\] 
\[ 
\text{ $\gamma_2$ is represented by the cycle $\{ (a,be^{i t})\mid t\in \R/2\pi\Z\}$.} 
\]
Then $\gamma_1= \begin{pmatrix} 1\\0\end{pmatrix}$, $\gamma_2=\begin{pmatrix} 0\\1\end{pmatrix}$ when expressed as column vectors. We also denote $\gamma_0:=-\gamma_1+\gamma_2$. Then  $\mu(\gamma_0)=0$ and $\gamma_0=
\begin{pmatrix} -1\\1\end{pmatrix}$ as a column vector. Likewise, the Maslov class 
$\mu\in H^1(T,\Z)$ is expressed as a row vector $\mu=\begin{pmatrix}2 & 2\end{pmatrix}$. 

The mapping class group of $T$ is then isomorphic to $GL(2,\Z)$, the group of of $2\times 2$ matrices with integral coefficients and with determinant $\pm1$. 
Let 
\[ 
G_\mu:=\{ g\in GL(2,\Z)\mid \mu\circ g=\mu\}. 
\] 
A direct computation shows that $G_\mu=G^+_\mu\sqcup G^-_\mu$, where 
\begin{gather} 
G^+_\mu=\Big\{ g_n:= \begin{pmatrix} 1-n & -n\\ n & 1+n\end{pmatrix}\mid n\in \Z\Big\} \label{g+} \\ 
G^-_\mu=\Big\{ f_n:=\begin{pmatrix} 1-n & 2-n\\ n & -1+n\end{pmatrix}\mid  n\in \Z\Big\} \label{g-} 
\end{gather} 
Elements of $G^+_\mu$ are of determinant  1, and elements of $G^-_\mu$ are of determinant -1. Also $g_n=(g_1)^n$, where $g_1$ is a generator of $G^+_\mu\cong \Z$.  
 On the other hand, $G^-_\mu$ consists elements 
of order 2 in $G_\mu$. Geometrically $g_n=(g_1)^n$ is the $(-n)$-Dehn twist along $\gamma_0$ while   each of 
$f_n$ is a generalized reflection with $f_n(\gamma_0)=-\gamma_0$. Note that 
\[ 
f_0^2=e=f_1^2, \quad (f_1f_0)^n=g_n, \quad (f_0f_1)^n=g_{-n}=(g_n)^{-1},\quad g_nf_m=f_{n+m}. 
\]  
Here $e$ denotes the identity element of $G_\mu$. 
Therefore 
\begin{equation} \label{gmu} 
G_\mu=\langle f_0,  f_1 \mid f_0^2=e=f_1^2\rangle \cong D_\infty 
\end{equation} 
is freely generated by the two elements $f_0,f_1$ of order 2, and is isomorphic to the infinite dihedral group $D_\infty$ \cite{Hum}. 

Note that if $L_s=\phi_s(T)$, $s\in[0,1]$, is a Lagrangian self-isotopy of $T$ so that $L_0=L_1=T$, $\phi_0=id$, then the induced isomorphism $(\phi_1)_*:H_1(T,\Z)\to H_1(T,\Z)$ is an element of $G_\mu$. I.e., the LMG $\cL(T)$ is a subgroup of $G_\mu$.

\begin{prop}  \label{lmgab} 
The LMGs of $T_{a,b}$ and $T_{a',b'}$ are isomorphic. 
\end{prop}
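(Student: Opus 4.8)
The plan is to show that the Lagrangian monodromy group of a Clifford torus does not depend on the parameters $a,b$ by interpolating between $T_{a,b}$ and $T_{a',b'}$ through Clifford tori and transporting Lagrangian self-isotopies along this path. First I would note that for any $a,b>0$ the Clifford torus $T_{a,b}$ is contained in a single smooth family $T_{a(s),b(s)}$, $s\in[0,1]$, with $(a(0),b(0))=(a,b)$ and $(a(1),b(1))=(a',b')$, say $a(s)=(1-s)a+sa'$ and $b(s)=(1-s)b+sb'$. Each $T_{a(s),b(s)}$ is an embedded Lagrangian torus, so this is a Lagrangian isotopy; by the usual isotopy-extension argument (already invoked in the Introduction) it is realized by a family $\psi_s\in\Diff_0^c(\R^4)$ with $\psi_0=\mathrm{id}$ and $\psi_s(T_{a,b})=T_{a(s),b(s)}$. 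Moreover $\psi_s$ can be chosen to send the standard basis $\{\gamma_1,\gamma_2\}$ of $H_1(T_{a,b},\Z)$ to the standard basis of $H_1(T_{a(s),b(s)},\Z)$, because the radial dilations defining the interpolation visibly respect the two coordinate circles; hence $(\psi_1)_*:H_1(T_{a,b},\Z)\to H_1(T_{a',b'},\Z)$ is the identity matrix in these bases.

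Next I would use $\psi_1$ to conjugate monodromies. Given a Lagrangian self-isotopy $\phi_s$ of $T=T_{a,b}$ with $\phi_0=\mathrm{id}$, $\phi_1(T)=T$, the concatenation $\psi_s\circ\phi_s\circ\psi_s^{-1}$ is a smooth family in $\Diff_0^c(\R^4)$; at each time it carries $T_{a(s),b(s)}$ to itself only at $s=0,1$, so instead I would form the path that first runs $\psi_s$, then $\psi_1\phi_s\psi_1^{-1}$, then $\psi_{1-s}$ — i.e.\ go from $T_{a',b'}$ back to $T_{a,b}$, apply $\phi_s$, return to $T_{a',b'}$. Each stage is a Lagrangian isotopy of embedded Lagrangian tori (the first and third because $\{T_{a(s),b(s)}\}$ is such an isotopy, the middle because $\phi_s$ is a Lagrangian self-isotopy of $T_{a,b}$), and the composite is a Lagrangian self-isotopy of $T_{a',b'}$. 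Its monodromy, read in the standard bases, equals $(\psi_1)_*\circ(\phi_1)_*\circ(\psi_1)_*^{-1}=(\phi_1)_*$ since $(\psi_1)_*$ is the identity matrix. This exhibits an injection $\cL(T_{a,b})\hookrightarrow\cL(T_{a',b'})$, and symmetry (run the interpolation backwards) gives the reverse inclusion, so $\cL(T_{a,b})\cong\cL(T_{a',b'})$ — indeed equal as subgroups of $GL(2,\Z)$ once both are written in the standard bases.

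**The main obstacle** I anticipate is bookkeeping the basis identification and making precise that ``Lagrangian isotopy'' composes: one must check that splicing the three families at $s=0$ and $s=1$ can be done smoothly (a standard reparametrization collar argument) and that at every intermediate time the torus stays embedded and Lagrangian, which is immediate here only because the interpolating family $T_{a(s),b(s)}$ consists of honest embedded Lagrangian tori for all $s$. A secondary point is to confirm that $(\psi_1)_*$ really is the identity on $H_1$ in the chosen bases rather than some nontrivial element of $G_\mu$; this follows from the explicit form of the dilation isotopy, which fixes each $\gamma_i$ setwise with the correct orientation. Everything else is routine, and in fact the proof gives the stronger statement that $\cL(T_{a,b})$ and $\cL(T_{a',b'})$ coincide as concrete subgroups of $GL(2,\Z)$, which is what is needed to reduce Theorem~\ref{cL} to a single convenient choice of $(a,b)$.
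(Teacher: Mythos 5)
Your proposal is correct and follows essentially the same route as the paper: the paper's proof simply takes a smooth path $c(s)$ in the first quadrant from $(a,b)$ to $(a',b')$ and uses the resulting Lagrangian isotopy of Clifford tori $T_{c(s)}$ to transport self-isotopies, which is exactly your conjugation argument. You have merely spelled out the basis-transport and splicing details that the paper leaves implicit.
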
 

\begin{proof} 
Identify the ordered pairs $(a,b), (a',b')$ with the coordinates of two points in the first quadrant of the $\R^2$ plane. Take a smooth path $c(s)=(c_1(s),c_2(s))$, $s\in [0,1]$, in the fist quadrant so that 
$c(0)=(a,b)$, $c(1)=(a',b')$, then $T_{c(s)}$ is a Lagrangian isotopy of Clifford tori between $T_{a,b}$ and $T_{a',b'}$. 
\end{proof} 

\begin{theo} 
The LMG of a Clifford torus $T$ is  $\cL(T)=G_\mu$. 
\end{theo}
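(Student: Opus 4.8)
\emph{Proof proposal.} The inclusion $\cL(T)\subseteq G_\mu$ is already established, so the only content is the reverse inclusion $G_\mu\subseteq\cL(T)$. Since $G_\mu\cong D_\infty$ and $g_1^kf_1=g_kf_1=f_{k+1}$ runs over all the reflections while $g_1^k=g_k$ runs over all the rotations, it is enough to realize the single reflection $f_1$ and the single Dehn twist $g_1$ as Lagrangian monodromies of $T$; then $G_\mu=\langle f_1,g_1\rangle\subseteq\cL(T)$. Throughout I use Proposition \ref{lmgab} to pass freely between Clifford tori, so I may work with $T=T_{\rho,\rho}$ for any small $\rho>0$ that is convenient.

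For $f_1$: the coordinate interchange $\s(z_1,z_2)=(z_2,z_1)$ is a unitary map of $\C^2$, and since $U(2)$ is connected it is joined to the identity by a path $\s_s$ in $U(2)\subset\Symp(\R^4,\w)$; cutting off the generating vector field outside a large ball (which changes nothing near the bounded region swept by the tori $\s_s(T_{\rho,\rho})$) turns $\s_s$ into a path in $\Diff^c_0(\R^4)$ along which $\s_s(T_{\rho,\rho})$ stays Lagrangian and which returns $T_{\rho,\rho}$ to itself. Its monodromy interchanges $\gamma_1$ and $\gamma_2$, hence equals $f_1$.

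For $g_1$ I would combine Proposition \ref{Ds} with the framing bookkeeping of Section \ref{loop}. Up to Hamiltonian isotopy the Clifford torus is the boundary of the symplectic normal disc bundle of two very different loops: of the circle $C^{(0)}=\{(0,\rho e^{it})\}$ in the $z_2$-plane — whose disc fibre lies in the $z_1$-plane, so that its fibre class is $\gamma_1$ and, using a $0$-framing, its base class is the Maslov-zero class $\gamma_0$ — and of the circle $C^{(1)}=\{(\rho e^{-it},0)\}$ in the $z_1$-plane, traversed in the orientation for which its $0$-framed base class is again $\gamma_0$ while its fibre class is $\gamma_2$. Since any two loops in $\R^4$ are smoothly isotopic, choose a smooth isotopy of loops from $C^{(0)}$ to $C^{(1)}$; by Proposition \ref{Ds}, taking the disc-bundle radius small produces a Lagrangian isotopy of Clifford tori, which (after pre- and post-composing with parameter-space Lagrangian isotopies of Clifford tori, which carry no monodromy) yields a Lagrangian self-isotopy. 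To identify its monodromy $h$: the symplectic normal disc fibre of $C^{(0)}$ is carried to that of $C^{(1)}$ with matching orientation (the fibre orientation being the symplectic one), so $h(\gamma_1)=\gamma_2$; and the $0$-framing of $C^{(0)}$ is carried to a framing of $C^{(1)}$ whose framing number lies in $2\Z$ by Proposition \ref{0frame}(i), so $h(\gamma_0)=\gamma_0+2m\gamma_2$ for some $m\in\Z$. Then $h(\gamma_2)=h(\gamma_1+\gamma_0)=-\gamma_1+(2+2m)\gamma_2$, so $h=\begin{pmatrix}0 & -1\\ 1 & 2+2m\end{pmatrix}$; imposing $\mu\circ h=\mu$ (which holds since $h\in\cL(T)\subseteq G_\mu$) forces $m=0$, i.e. $h=g_1$. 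Hence $f_1,g_1\in\cL(T)$ and $\cL(T)=G_\mu$.

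The routine parts are the two small verifications that $\s_s$ can be made compactly supported and that the two disc-bundle presentations of $T_{\rho,\rho}$ carry the asserted base and fibre classes. The step demanding care — the main obstacle — is the last one: pinning down the monodromy of the $C^{(0)}\rightsquigarrow C^{(1)}$ isotopy to be exactly $g_1$ rather than $g_1$ composed with some power of the Dehn twist along $\gamma_0$. The parity statement of Proposition \ref{0frame}(i), together with the a priori containment $\cL(T)\subseteq G_\mu$, is precisely what kills the ambiguity; one also has to keep the isotopy of loops within the range where Proposition \ref{Ds} applies, which is why the reduction to small $\rho$ via Proposition \ref{lmgab} is made at the start.
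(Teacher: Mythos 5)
Your argument is correct, and your realization of $f_1$ is the same as the paper's Case~1 (the $U(2)$-rotation swapping the two factors, generated by a quadratic Hamiltonian cut off to compact support). Where you genuinely diverge is in the choice of second generator. The paper completes the dihedral group with a second \emph{reflection} $f_0$, obtained by rotating the core circle $\{(0,0,b\cos t,b\sin t)\}$ onto itself with reversed orientation via an explicit family $\Psi_s\in SO(4)$ and then locating the resulting element in the list $G^-_\mu$. You instead realize the translation generator $g_1$ by isotoping the core loop of one symplectic-normal-disc-bundle presentation of the Clifford torus onto the core loop of the transverse presentation, so that the (symplectically oriented) fibre class goes to the fibre class, $\gamma_1\mapsto\gamma_2$, while the $0$-framed section class $\gamma_0$ goes to some section class $\gamma_0+k\gamma_2$; the condition $\mu\circ h=\mu$ then forces $k=0$ and $h=g_1$. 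Your endpoint computations (fibre class $\gamma_1$ resp.\ $\gamma_2$, $0$-framed section class $\gamma_0$ at both ends with the clockwise parametrization of $C^{(1)}$) check out, and since $g_nf_m=f_{n+m}$ one indeed has $G_\mu=\langle f_1,g_1\rangle$. Both routes rest on Proposition \ref{Ds}, and both pin the monodromy down only up to the a priori containment $\cL(T)\subset G_\mu$ (the paper likewise fixes $m=2$ in $f(\gamma_2)=-\gamma_2+m\gamma_1$ by comparing with $G^-_\mu$). Your version costs an extra closing-up isotopy through Clifford tori from $T_{\rho,\epsilon}$ back to $T_{\epsilon,\rho}$, and it is sensitive to the orientation in which the target circle is traversed --- with the opposite orientation the identical construction produces $f_1$ rather than $g_1$, so that choice must be made explicit, as you do. Two small remarks: the smallness that matters for Proposition \ref{Ds} is that of the fibre radius $\epsilon$ along the whole family of loops, not of $\rho$; and the appeal to Proposition \ref{0frame}(i) is not actually needed, since once the image of $\gamma_0$ is known to be a section class $\gamma_0+k\gamma_2$ the Maslov condition alone kills $k$.
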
 

\begin{proof} 
 We will explicitly construct Lagrangian self-isotopies of $T$ with monodromies $f_0$ and $f_1$ respectively. Then $\cL(T)=G_\mu$ 
 following (\ref{gmu}). 

\vspace{.1in} 
\noindent 
{\bf Case 1: The  monodromy $f_1=\begin{pmatrix} 0 & 1\\ 1 & 0\end{pmatrix}$}. \  Recall that in \cite{Y1} we have constructed a Lagrangian self-isotopy for $T_{b,b}$ with  monodromy $f_1$ (denoted by $\tilde{f}_1$ in \cite{Y1}). For completeness we repeat the construction here. First let us consider the 
path in the unitary group $U(2)$ defined by 
\[ 
A_s:=\begin{pmatrix} \cos \frac{\pi  s}{2} & -\sin \frac{\pi  s}{2} \\ \sin \frac{\pi  s}{2} & \cos \frac{\pi  s}{2} \end{pmatrix}\in GL(2,\C), \quad 0\leq s \leq 1. 
\] 
$A_s$ acts on $\C^2$ and is the time $s$ map of the Hamiltonian vector field 
$X=\frac{\pi}{2}(x_1\partial_{x_2}-x_2\partial_{x_1}+y_1\partial_{y_2}-y_2\partial_{y_1})$, $\w(X, \cdot )=-dH$, $H=\frac{\pi}{2}(x_2y_1-x_1y_2)$. Observe that $A_1(T_{a,b})=T_{b,a}$,  $(A_1)_*=f_1$ on $H_1(T_{b,b},\Z)$.
Fix $b>0$ and modify $H$  to get a $C^\infty$ function $\tilde{H}$ with compact support such that $\tilde{H}=H$ on $\{ |z_1|\leq 2b, \ |z_2|\leq 2b\}$. Let $\phi_s$ be the time $s$ map of the flow of the Hamiltonian vector field associated to $\tilde{H}$. Then $\phi_1(T_{b,b})=(T_{b,b})$, and $(\phi_1)_*=(A_1)_*=f_1$ on $H_1(T_{b,b},\Z)$. 
Now extend this self-isotopy of $T_{b,b}$ by conjugating it smoothly by a 
Lagrangian isotopy between $T_{a,b}$ and $T_{b,b}$ as described in Proposition \ref{lmgab}. 
We may assume that the basis $\{\gamma_1,\gamma_2\}$ of $T_{b,b}$ is transported to the basis $\{\gamma_1,\gamma_2\}$ of $T_{a,b}$ along the latter isotopy. Readers can check now that the extended isotopy induces a Lagrangian self isotopy of $T_{a,b}$ with monodromy $f_1$. 

\vspace{.1in} 
\noindent 
{\bf Case 2: The monodromy $f_0=\begin{pmatrix}  1 & 2\\ 0 & -1\end{pmatrix}$.} \  
For $s\in [0,1]$ consider the family of diffeomorphisms $\Psi_s:\R^4\to \R^4$, 
\[ 
\Psi_s(x_1,y_1,x_2,y_2):=(x_1\cos \pi s -y_2\sin \pi s, y_1,x_2, y_2\cos \pi s+x_1\sin \pi s). 
\] 
Note that $\Psi_s\in SO(4,\R)$  are  rotations on the $x_1y_2$-plane, with the 
$y_1x_2$-plane fixed.   
Consider the simple closed curve $C_0$ defined by 
\[ 
\{ (x_1=0, y_1=0, x_2=b\cos t, y_2=b\sin t)\in \R^4 \mid  t\in [0,2\pi]\}. 
\] 
Define $C_s(t):=\Psi_s(C_0)(t)$. $C_s$, $s\in [0,1]$ is a smooth family of curves. Note that $C_1$ equals $C_0$ but with the reversed orientation. Recall from Proposition \ref{Ds}  that for $\epsilon>0$ small enough, the Lagrangian torus boundary $L_s$ of the symplectic normal disc bundle $D^\w_s$ of radius $\epsilon$ of $C_s$ are embedded in $\R^4$, with core curve $C_s$. Note that $L_0=T_{\epsilon,b}=L_1$ as sets, so we obtain a Lagrangian self-isotopy of $T_{\epsilon,b}$ for $\epsilon>0$ small enough. 
This  self-isotopy of $T_{\epsilon,b}$ reverses the orientation of $T_{\epsilon,b}$, so the corresponding 
monodromy $f$ is an element of $G^-_\mu$, with determinant -1 when expressed as a matrix. 
Note that $\Psi_1$ reverses the orientation of the core curve $C_0$ 
of $D^\w_0$. Since $\gamma_2\subset \partial D^\w_0=T_{\epsilon,b}$ is longitudinal, this implies that $f$ sends $\gamma_2$ to $-\gamma_2+m\gamma_1$ for 
some $m\in\Z$. Then by comparing with the formula of $f_n$ in (\ref{g-}) one finds that $f=f_0=\begin{pmatrix}  1 & 2\\ 0 & -1\end{pmatrix}$ and $m=2$. 

Now, similar to what is done in Case 1, extend the Lagrangian self-isotopy of $T_{\epsilon,b}$ into an Lagrangian self-isotopy of $T_{a,b}$ through Clifford tori. The corresponding monodromy is $f_0$. 
This completes the proof. 
\end{proof} 

\begin{rem} 
{\rm 
If we take $C_0$ to be the curve  
\[ 
\{ (x_1=a\cos t, y_1=a\sin t,x_2=0,y_2=0) \in \R^4 \mid t\in [0,2\pi]\},  
\] 
then $\Psi_s$ will induce a Lagrangian self-isotopy of $T_{a,\epsilon}$ with monodromy 
$f_2=\begin{pmatrix}  -1 & 0\\ 2 & 1\end{pmatrix}$. 
The reader can check that $G_\mu=\langle f_1,f_2\mid f_1^2=e=f_2^2\rangle$, Hence $\cL(T)=G_\mu$ again. 
} 
\end{rem}

\section{Smooth Monodromy Group (SMG) of a Clifford torus} \label{smg}

We start with the proof of Theorem \ref{L0L1}. 

\begin{proof}[{\bf Proof of Theorem \ref{L0L1}}.] \  By the linearity of 
$(\phi_1)_*$ and $\mu$ it is enough to prove for the case when 
$\gamma\in H_1(L_0,\Z)$ is primitive. 

Fix a positive basis $\{ \gamma_1,\gamma_2\}$ for $H_1(L_1,\Z)$ with $\mu(\gamma_1)=2=\mu(\gamma_2)$. Given a primitive class $\gamma\in 
H_1(L_0,\Z)$ we have 
$(\phi_1)_*(\gamma)=n_1\gamma_1+n_2\gamma_2$ for some 
$n_1,n_2\in \Z$. 
Let $C_0\subset L_0$ be an embedded curve representing the class $\gamma$. 
Let $C_s:=\phi_s(C_0)$. We denote by $N_s$ and $N^\w_s$ respectively the normal bundle and the symplectic normal bundle of $C_s$. 
By assumption $C_1$ represents the class $n_1\gamma_1+n_2\gamma_2$. 

Let $\s_0$ denote a non-vanishing  section  of the 
$\R^1$-bundle $(T_{C_0}L_0)\cap N^\w_0$ over $C_0$. 
Then 
$\s_0$ is a $\mu(\gamma)/2$-framing of $N^\w_0$. 
Extend $\s_0$ to a smooth family 
$\s_s$ with $0\leq t\leq 1$, so that $\s_s$ is a $\mu(\gamma)/2$-framing of $N^\w_s$. Let $m:=\mu(\gamma)/2$.

Recall $J_0$ is the standard complex structure over $\R^4\cong \C^2$.
Fix a trivialization for $N_s\cong S^1\times \R\times \R\times \R$ 
by taking $\{ J_0\dot{C}_s(t),\s_s(t),J_0\s_s(t)\}$ as the basis of 
the fiber of $N_s$ at $C_s(t)$,  so that 
the coordinate $(t,t^*, x,y)$ represents the fiber $t^*J_0\dot{C}_s(t)+x\s_s(t)+
yJ_0\s_s(t)$. 

Now let $\eta_s:=\phi_s(\sigma_0)$. Note that $\eta_1$ is a non-vanishing 
section of $N^\w_1\cap T_{C_1}L_1$, 
and an $(n_1+n_2)$-framing of $N^\w_1$. 
Let $k:=n_1+n_2$. 

Recall that $\s_1$ is an $m$-framing of $N^\w_1$. 
Up to a homotopy of $\s_s$ if necessary, we may assume the following:  
\begin{itemize} 
\item For each $s$, $\eta_s=\sigma_s$ at $t=0$. 
\item For $t\in S^1=\R/2\pi\Z$, $\eta_1(t)=\sigma_1(t)\cos  (k-m) t +J_0\sigma_1(t)\sin  (k-m)t$. 
\end{itemize} 
Then for each $s$, $\phi_s$ associates a smooth map 
$\Phi_s:S^1\to GL^+(4,\R)$, 
\[ 
\Phi_s(t):=(\phi_s)_*(t)\in \begin{pmatrix}    1 & * \\ 0 & GL^+(3,\R) \end{pmatrix}, 
\] 
\[ 
\Phi_0(t)=Id, \quad \Phi_1(t)=\begin{pmatrix} 1 & * & 0 & * \\ 
0 & * & 0 & * \\ 
0 & * & \cos (k-m)t & * \\ 0 & * & \sin (k-m)t & * 
\end{pmatrix} . 
\] 
Then second and fourth columns of $\Phi_1$ represent $(\phi_1)_*(J_0\dot{C}_0)$ and $(\phi_1)_*(J_0\sigma_0)$ respectively.

Extend $\dot{C}_0$ to a homotopically trivial non-vanishing vector field $u_0$ on $L_0$. 
Let $u_s:=(\phi_s)_*u_0$. Then $u_1|_{C_1}=\dot{C}_1$. 
By continuity and $\ell_{L_0}=0$ we have 
\begin{equation} \label{lk1} 
lk(C_1+\epsilon\cdot 
(\phi_1)_*J_0u_0,L_1)=lk(C_0+\epsilon J_0u_0,L_0)=0. 
\end{equation} 
 Similarly, since $\ell_{L_1}=0$, 
\begin{equation} \label{lk2} 
lk(C_1+\epsilon J_0(\phi_1)_*u_0,L_1)=
lk(C_1+\epsilon J_0u_1,L_1)=0. 
\end{equation} 
Note that (\ref{lk1}) and (\ref{lk2}) holds true for any 
class $[C_0]$ and hence $[C_1]=(\phi_1)_*[C_0]$. 
This shows that $(\phi_1)_*J_0|_{L_1}$ is homotopic to 
$J_0|_{L_1}$ in $\cJ^+(L_1)$ as defined in Remark \ref{ellJ}. 
In particular, 
$(\phi_1)_*J_0u_0$ is homotopic to $J_0u_1$ as non-vanishing 
sections of the normal bundle $N_{L_1}$ of $L_1\subset \R^4$. 
So up to an $L_1$-fixing isotopy we may assume that 
along $C_1$, 
$(\phi_1)_*J_0\dot{C}_0=J_0\dot{C}_1$ and $(\phi_1)_*N^\w_{C_0}=N^\w_{C_1}$. I.e., 
$\Phi_1=(\phi_1)_*$ satisfies 
\begin{equation}\label{Phi} 
\Phi_1(t)=
\begin{pmatrix} 1 & 0 & 0 & 0 \\ 0 & 1 & 0 & 0 \\ 
0 & 0 & \cos (k-m)t & * \\ 0 & 0 & \sin (k-m)t & * \end{pmatrix} \in GL^+(4,\R). 
\end{equation} 
Now 
$\Phi_1$ satisfies the hypothesis of Proposition \ref{0frame}(i), 
so by a similar argument as employed there we have, up to 
an $L_1$-fixing isotopy, 
\[ 
\Phi_1(t)=\begin{pmatrix} 1 & 0 & 0 & 0 \\ 0 & 1 & 0 & 0 \\ 
0 & 0 & \cos (k-m)t & -\sin (k-m)t \\ 0 & 0 & \sin (k-m)t & \cos (k-m)t 
\end{pmatrix} \in GL^+(4,\R)
\] 
with 
\begin{equation}\label{k-m} 
 k-m\in2\Z
\end{equation} 
since the lower $3\times 3$ block of $\Phi_1$ is free homotopic to $Id\in GL^+(3,\R)$ with 
respect to the basis $\{ J_0\dot{C_1}, \sigma_1,J_0\sigma_1\}$. 
This completes the proof. 
\end{proof}

\begin{cor} 
The SMG $\cS(L)$ of an embedded Lagrangian torus $L\subset 
\R^4$ is contained in the subgroup  $\cX\subset \text{Isom}(H^1(L,\Z))$ 
defined by 
\[ 
\cX:=\{ g\in {Isom}(H_1(L,\Z))\mid \mu_L \circ g -\mu_L \in 4\cdot H^1(L,\Z)\} . 
\] 
\end{cor}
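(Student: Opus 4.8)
The plan is to read off the statement directly from Theorem \ref{L0L1}, which has just been proved. Let $L \subset \R^4$ be an embedded Lagrangian torus and let $g \in \cS(L)$. By definition of the smooth monodromy group, there is a smooth self-isotopy $\phi_s \in \Diff^c_0(\R^4)$, $s \in [0,1]$, with $\phi_0 = id$ and $\phi_1(L) = L$, such that $g = (\phi_1)_*$ on $H_1(L,\Z)$. Applying Theorem \ref{L0L1} with $L_0 = L_1 = L$, we obtain that for every $\gamma \in H_1(L,\Z)$,
\[
\mu_L(g(\gamma)) - \mu_L(\gamma) \in 4\Z,
\]
which is precisely the condition $\mu_L \circ g - \mu_L \in 4 \cdot H^1(L,\Z)$, i.e.\ $g \in \cX$. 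Hence $\cS(L) \subseteq \cX$.

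The only point requiring a line of justification is that this containment is that of a \emph{subgroup}: one should note that $\cS(L)$ is a group (it is closed under composition and inversion because self-isotopies can be concatenated and reversed), and $\cX$ is readily checked to be a subgroup of $\mathrm{Isom}(H_1(L,\Z))$ — if $\mu \circ g - \mu$ and $\mu \circ h - \mu$ both lie in $4 \cdot H^1(L,\Z)$, then $\mu \circ (gh) - \mu = (\mu \circ g - \mu)\circ h + (\mu \circ h - \mu)$ does too, since the first summand is again in $4 \cdot H^1$ (composition with the isomorphism $h$ preserves divisibility), and similarly for inverses. So $\cS(L)$ is a subgroup of $\cX$.

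There is no real obstacle here: the corollary is an immediate repackaging of Theorem \ref{L0L1}, with the self-isotopy case being the special case $L_0 = L_1 = L$. The substance of the argument lies entirely in the proof of Theorem \ref{L0L1} — in particular in the use of $\ell_{L_0} = \ell_{L_1} = 0$ to straighten the differential $(\phi_1)_*$ along $C_1$ into a standard rotation block in $GL^+(4,\R)$, and in the parity constraint coming from $\pi_1(GL^+(4,\R)) \cong \Z_2$. Once that is in hand, the corollary follows by specialization.
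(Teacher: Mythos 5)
Your proposal is correct and is exactly the paper's intended argument: the corollary is stated without proof immediately after Theorem \ref{L0L1} precisely because it is the specialization $L_0=L_1=L$, which is what you carry out. The extra remark verifying that $\cX$ is a subgroup is a harmless and reasonable addition.
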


\begin{cor} 
Let $L\subset\R^4$ be an embedded Lagrangian torus. 
Fix a positive basis $\{ \gamma_1,\gamma_2\}$ for $H_1(L,\Z)$ with $\mu(\gamma_1)=2=\mu(\gamma_2)$. Then with 
respect to $\{  \gamma_1,\gamma_2\}$, $\cX$ is represented as 
\begin{align} 
\cX & =\cX^o\sqcup \cX^e\subset GL(2,\Z),  \nonumber \\ 
\cX^o & :=\Big\{ \begin{pmatrix} 1+2p & 2s\\ 2r & 1+2q\end{pmatrix}\in GL(2,\Z)\mid p,q,r,s\in \Z\Big\},  \\ 
\cX^e & :=\Big\{ \begin{pmatrix} 2r & 1+2q \\ 1+2p & 2s\end{pmatrix}\in GL(2,\Z)\mid p,q,r,s\in \Z \Big \}.  
\end{align} 
\end{cor}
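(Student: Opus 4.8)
The plan is to reduce the defining condition of $\cX$ to a parity condition on matrix entries and then run a short finite case analysis. With respect to the fixed positive basis $\{\gamma_1,\gamma_2\}$, write $g\in\text{Isom}(H_1(L,\Z))\cong GL(2,\Z)$ as $g=\begin{pmatrix} a & b\\ c & d\end{pmatrix}$ with $ad-bc=\pm 1$, and recall that (following the row-vector convention for the Maslov class and the column-vector convention for $H_1$) the normalization $\mu(\gamma_1)=2=\mu(\gamma_2)$ means $\mu_L=\begin{pmatrix} 2 & 2\end{pmatrix}$. Then
\[
\mu_L\circ g-\mu_L=\begin{pmatrix} 2a+2c-2 & 2b+2d-2\end{pmatrix},
\]
so $g\in\cX$ if and only if $2a+2c-2\in 4\Z$ and $2b+2d-2\in 4\Z$, i.e.\ if and only if $a+c$ is odd and $b+d$ is odd.

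Next I would enumerate the parity patterns of $(a,b,c,d)$ compatible with these two conditions. Saying $a+c$ is odd means exactly one of $a,c$ is odd, and likewise for $b,d$, which leaves four patterns. Two of them, $\begin{pmatrix}\text{odd} & \text{odd}\\ \text{even} & \text{even}\end{pmatrix}$ and $\begin{pmatrix}\text{even} & \text{even}\\ \text{odd} & \text{odd}\end{pmatrix}$, force $ad-bc$ to be even and hence cannot occur in $GL(2,\Z)$. The two surviving patterns are $\begin{pmatrix}\text{odd} & \text{even}\\ \text{even} & \text{odd}\end{pmatrix}$ and $\begin{pmatrix}\text{even} & \text{odd}\\ \text{odd} & \text{even}\end{pmatrix}$; writing $a=1+2p$, $d=1+2q$, $b=2s$, $c=2r$ in the first case and $a=2r$, $d=2s$, $b=1+2q$, $c=1+2p$ in the second recovers exactly the stated descriptions of $\cX^o$ and $\cX^e$.

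Finally I would record that every matrix of either listed shape automatically has odd determinant (so the only additional constraint hidden in ``$\in GL(2,\Z)$'' is $\det=\pm 1$), and that $\cX^o\cap\cX^e=\emptyset$ since the $(1,1)$-entry is odd throughout $\cX^o$ and even throughout $\cX^e$; hence $\cX=\cX^o\sqcup\cX^e$, which is the claim. There is essentially no serious obstacle in this proof beyond bookkeeping; the one point deserving care is the use of invertibility to discard the two ``bad'' parity patterns, without which the right-hand side would be strictly larger than $\cX$.
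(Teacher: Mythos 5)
Your proof is correct and follows essentially the same route as the paper's: the condition $\mu_L\circ g-\mu_L\in 4\cdot H^1(L,\Z)$ reduces to the two entries in each column of $g$ having opposite parity, and the constraint $\det g=\pm 1$ then eliminates the two parity patterns in which the even entries share a row, leaving exactly $\cX^o$ and $\cX^e$. Your additional remarks on the reverse inclusion and the disjointness of $\cX^o$ and $\cX^e$ are accurate but routine.
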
 

\begin{proof} 
Recall that $\mu=\mu_L$ has divisibility 2. Express $\gamma_1,
\gamma_2$ as column vectors $\begin{pmatrix} 1 \\ 0\end{pmatrix}$ and $\begin{pmatrix} 0 \\ 1\end{pmatrix}$ respectively. 
For $g=(g_{ij})\in \cX$, that $\mu(g(\gamma_j))-\mu(\gamma_j)
\in 4\Z$ implies that both $2(g_{11}+g_{21})-2$ and $2(g_{12}+g_{22})-2$ are divisible by $4$. Hence 
(i) $g_{11}$ and $g_{21}$ have different parity, and (ii) $g_{12}$ and $g_{22}$ have different parity. 
Since $\det g=\pm 1$,  the two even valued entries of $g$ cannot lie in the same column nor the same row of $g$, hence either $g\in \cX^o$ or $g\in \cX^e$. 
\end{proof}

We now move on to determine the group $\cS(T)$ of a Clifford torus $T$.  The proof is divided into the following three propositions.

\begin{prop} \label{mcg} 
Recall the basis $\{ \gamma_1,\gamma_2\}$ for $H_1(T_{a,b},\Z)$. 
Each of the following four types of elements of $GL(2,\Z)\cong \text{Isom}(H_1(T_{a,b},\Z))$ can be realized as the monodromy of some smooth self isotopy of $T_{a,b}$: 
\begin{enumerate} 
\item a $k$-Dehn twist $\tau_1^k:=\begin{pmatrix}1 & k\\ 0&1\end{pmatrix}$ along $\gamma_1$ with $k\in2\Z\setminus \{ 0\}$, 
\item a $k$-Dehn twist $\tau_2^k:=\begin{pmatrix}1 & 0\\ -k&1\end{pmatrix}$ along $\gamma_1$ with $k\in2\Z\setminus \{ 0\}$, 
\item the $\gamma_1$-reflection  $\bar{r}_1:=\begin{pmatrix}-1 & 0\\ 0&1\end{pmatrix}$, 
\item  the $\gamma_2$-reflection  $\bar{r}_2:=\begin{pmatrix}1 & 0\\ 0& -1\end{pmatrix}$. 
\end{enumerate} 
\end{prop}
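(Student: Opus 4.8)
The plan is to construct each of the four monodromies explicitly, following the same template used in Section~\ref{lmg}: produce an ambient isotopy of $\R^4$ (supported near $T$, then extended with compact support) whose time-one map preserves $T$ setwise and induces the prescribed map on $H_1(T,\Z)$. For the two reflections $\bar r_1,\bar r_2$, I would use linear maps of $\C^2$: complex conjugation in the first or second coordinate, $(z_1,z_2)\mapsto(\bar z_1,z_2)$ or $(z_1,z_2)\mapsto(z_1,\bar z_2)$, preserves $T_{a,b}$ and reverses the orientation of exactly one of the generating circles. Such a map is not isotopic to the identity through \emph{symplectomorphisms} (it is orientation-reversing on each $\C$-factor, hence anti-symplectic on that factor), but it \emph{is} smoothly isotopic to the identity in $\Diff_0^c(\R^4)$ after the standard trick of composing with a rotation in an auxiliary plane: realize $(z_1,z_2)\mapsto(\bar z_1,z_2)$ as the time-one map of a rotation by $\pi$ that mixes the $x_1$-axis with, say, the $y_2$-axis and simultaneously the $y_1$-axis with the $x_2$-axis in such a way that the orbit returns $T$ to itself, exactly as $\Psi_s$ was used in Case~2 of the proof of Theorem~\ref{cL}; one then cuts off the generating vector field to compact support away from $T$. (Concretely one can reuse the $C_0$/$\Psi_s$ construction, choosing the core curve so that the induced monodromy on the boundary torus is $\bar r_1$ or $\bar r_2$ rather than $f_0$ or $f_2$, and matching signs by the linking/longitude argument there.)

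For the Dehn twists $\tau_1^k$ and $\tau_2^k$ with $k\in 2\Z\setminus\{0\}$, I would use the disc-bundle picture of Section~\ref{loop}. By Proposition~\ref{Ds}, $T_{\epsilon,b}$ is (up to Hamiltonian isotopy) the boundary of the symplectic normal disc bundle $D^\w$ of the core circle $C=\{(0,0,b\cos t,b\sin t)\}$, and a Dehn twist of $\partial D^\w$ along the longitude $\gamma_2$ by $k$ units corresponds to regluing the disc bundle by a loop of degree $k$ in $SO(2)\subset U(1)$ acting on the fibers $(x,y)\in\R^2$. The point is that such a fiberwise rotation is realized by an ambient isotopy: spin the normal $\R^2$-planes of $C$ through a full $k\pi$ rotation as $t$ runs around $C$, using a Hamiltonian supported in a neighborhood of $C$ of the form $\rho(|w|^2)\,\theta(t)$-type. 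The constraint $k\in 2\Z$ is exactly the parity obstruction from Proposition~\ref{0frame}(i) / the $\pi_1(GL^+(4,\R))\cong\Z_2$ computation: only even framing changes are achievable by a \emph{smooth} isotopy of the core loop that starts and ends at the identity, so $k$ must be even, and conversely every even $k$ is achieved by Proposition~\ref{0frame}(ii). This yields $\tau_2^k$ on $T_{\epsilon,b}$; then extend through Clifford tori to $T_{a,b}$ as in Proposition~\ref{lmgab}. Taking instead the core circle $C=\{(a\cos t,a\sin t,0,0)\}$ produces the Dehn twist along $\gamma_1$, giving $\tau_1^k$.

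I would then assemble the four cases: state the core curve, the explicit isotopy (rotation family or normal-bundle spin), verify by inspection that the time-one map fixes $T$ setwise, and compute the induced action on $\gamma_1,\gamma_2$ using that $\gamma_j$ is either a fiber or a section of the relevant $S^1$-bundle structure on $T$ (so its image is determined by whether its orientation is reversed and by the framing-change count $k$), exactly as in the $f_0$ computation in the proof of Theorem~\ref{cL}. The signs in (i), (ii) (the $-k$ in $\tau_2^k$) come out of the orientation conventions for the longitude versus the fiber, and are pinned down by the linking-number normalization $\ell_L=0$ as in the proof of Theorem~\ref{L0L1}.

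\textbf{Main obstacle.} The delicate point is not the construction of the rotations but making rigorous the claim that a \emph{fiberwise} $2$-Dehn twist of the boundary torus of a symplectic normal disc bundle is induced by a \emph{globally Hamiltonian-supported, identity-isotopic} ambient diffeomorphism — i.e., correctly setting up the cutoff Hamiltonian near the core loop so that (a) the flow genuinely rotates the symplectic normal fibers by the desired total angle, (b) it preserves the torus $\partial D^\w$ (this needs the rotation to be by an integer multiple of $2\pi$ in the fiber so that the disc of radius $\epsilon$ returns to itself), and (c) one can then extend to compact support in $\Diff_0^c(\R^4)$. Equivalently, one must be careful that the framing bookkeeping matches Proposition~\ref{0frame}: the \emph{smooth} isotopy changes the framing of $N^\w_C$ by an even amount, which is precisely what forces and permits $k\in 2\Z$. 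Getting this parity and the orientation signs exactly right — so that the matrix is $\tau_2^k$ and not, say, $\tau_2^{-k}$ or $\tau_1^k\bar r_2$ — is the part that requires genuine care rather than routine checking.
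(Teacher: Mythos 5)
There are two genuine gaps, both stemming from the same source: you try to realize monodromies that do \emph{not} preserve the Maslov class by constructions that are forced to preserve it. For the reflections, the map $(z_1,z_2)\mapsto(\bar z_1,z_2)$ has determinant $-1$ as a linear map of $\R^4$; it is orientation-reversing, hence not in $\Diff_0^c(\R^4)$, and composing it with rotations (determinant $+1$) cannot repair this, so the claim that it ``is smoothly isotopic to the identity after composing with a rotation in an auxiliary plane'' fails. Your fallback --- reusing the $C_0/\Psi_s$ disc-bundle construction with a different core curve --- cannot work either: by Proposition~\ref{Ds} that construction produces a \emph{Lagrangian} self-isotopy, whose monodromy must lie in $G_\mu$, whereas $\mu\circ\bar r_1=(-2,2)\neq\mu$. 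The missing idea is to use a map that agrees with the reflection only \emph{on} $T$ and absorbs the orientation reversal in a direction normal to $T$: the paper writes $T$ as the equatorial torus of $B^3\times S^1$ and extends the circle reflection $(x_1,y_1)\mapsto(-x_1,y_1)$ to $\mathrm{diag}(-1,1,-1)\in SO(3)$ on the $B^3$-factor, which is connected to the identity in $SO(3)$ and then cut off to compact support.

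The same issue sinks the Hamiltonian implementation of the Dehn twists. Since $\mu\circ\tau_1^k=(2,2k+2)\neq\mu$ for $k\neq 0$, the element $\tau_1^k$ is not in $G_\mu$ and therefore cannot be the monodromy of any Lagrangian --- in particular any Hamiltonian --- self-isotopy; concretely, the symplectic version of the fiberwise rotation $w\mapsto e^{i\alpha(t)}w$ requires the compensating shift $t^*\mapsto t^*+\tfrac{1}{2}\alpha'(t)|w|^2$ and so does not preserve the torus $\{t^*=0,\ |w|=\epsilon\}$. The twist must be implemented by the \emph{non}-symplectic fiberwise rotation, and the entire content of the proof is then showing that this map is isotopic to the identity in $\Diff_0^c(\R^4)$: the paper does this by regarding $t\mapsto(\text{rotation by }kt)$ as a loop in $SO(3)$ acting on a $B^3$-fiber of a tubular neighborhood $B^3\times S^1$ of the core circle, which is nullhomotopic precisely when $k$ is even, the nullhomotopy itself supplying the ambient isotopy. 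You correctly identify the $\pi_1\cong\Z_2$ parity obstruction, but your appeal to Proposition~\ref{0frame}(ii) for the converse (``every even $k$ is achieved'') does not do this job --- that proposition adjusts a given isotopy to respect framings, it does not produce an isotopy realizing a prescribed framing change. (A minor structural remark: the paper only constructs cases (i) and (iii) and obtains (ii) and (iv) by conjugating with $f_1\in\cL(T)$, which shortens the work.)
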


\begin{proof} 
Since the specific values of $a,b>0$ are immaterial, we may take values of $a,b$ that are convenient 
for the construction of a smooth self-isotopy. In the 
following we will denote a Clifford torus as $T$. 
Also, since the Lagrangian monodromy $f_1=\begin{pmatrix} 0 & 1\\ 1 & 0\end{pmatrix}$ swaps elements in (i)(iii) with elements in (ii)(iv), we only have to 
prove the two cases: (i) and (iii). 

Let $C:=\{ (0,be^{it}\mid t\in[0,2\pi]\}\subset \R^4$. 

\vspace{.1in} 
\noindent
{\bf Case (i): $\tau_1^k$, $k\neq 0$ is even.} \  

Let $U$ be a tubular neighborhood of $C$, $U\cong B^3\times S^1$. Parameterize $U$ 
by $(\rho, \varphi,\theta, t)$ where $(\rho,\varphi,\theta)\in [0,\rho_0]\times S^2$ are the spherical coordinates of the 3-ball $B^3$ with $\rho$ being the radial coordinate, $(\varphi, \theta)$ being spherical coordinates on $S^2$ and $(\rho_0,\pi/2,\theta,t)$ parameterizes the equator of the $S^2$-fiber over $t$. We also assume that $(\rho_0,\pi/2,\theta, t)\in S^1\times S^1$ parameterize $T$ so 
that $\tau_1^k$ is represented by the map $\phi(\theta,t)=(\theta+kt, t)$.  Extend $\phi$ over $U$ to get 
\[
\tilde{\phi}:U\to U, \quad \tilde{\phi}(\rho, \varphi,\theta,t)=(\rho,\psi_t(\varphi,\theta),t):=(\rho,(\varphi,\theta+kt),t). 
\] 
As a loop in $SO(3)$ parameterized by $t$, the maps $\psi_t$ represents the trivial class of $\pi_1(SO(3))$, following the assumption that $k$ is even. Then there exists between $\psi_t$ and the constant loop $Id$ a smooth homotopy $\psi_{s,t}\in SO(3)$, $s,t\in [0,1]\times S^1$, such that $\psi_{0,t}=Id=\psi_{s,0}$, $\psi_{1,t}=\psi_t$. This induces a smooth homotopy $\tilde{\phi}_s$, $s\in [0,1]$, between $\tilde{\phi}_1=\tilde{\phi}$ and $\tilde{\phi}_0=id_U$ with  
\[ 
\tilde{\phi}_s(\rho,(\varphi,\theta),t):=(\rho, \psi_{s,t}(\varphi,\theta),t). 
\] 
Let $X_s$ be the time dependent vector field on $U$ that generates the isotopy $\tilde{\phi}_s$, i.e., $\dfrac{d\tilde{\phi}_s}{ds}=X_s\circ\tilde{\phi}_s$, $\tilde{\phi}_0=id$. 
Note that $X_s$ is tangent to $\partial U$. Extend $X_s$ over $\R^4$ smoothly with compact support. Denote the time 1 map of the extended $X_s$ as $\phi'$. Then $\phi'\in \Diff_0^c(\R^4)$  is isotopic to the identity map, and $\phi'|_L=\phi$. 

\vspace{.1in} 
\noindent
{\bf Case (iii):  $\bar{r}_1$.} \ 

Parameterize $B^3$ by Cartesian coordinates $(x_1,y_1,x_2)$ with $x_1^2+y_1^2+x_2^2\leq 1$ so that $T\subset U=B^3\times S^1$ is parameterized by 
$\{ (x_1,y_1,0,t)\mid x_1^2+y_1^2=1\}$. 
Without loss of generality we may assume that $\bar{r}_1$ is represented by the map 
$\phi(x_1,y_1,0,t)=(-x_1,y_1,0,t)$ for $(x_1,y_1,0,t)\in T$. 
Extend $\phi$ over $U$ to get 
\[ 
\tilde{\phi}:U\to U, \quad \tilde{\phi}(x_1,y_1,x_2,t)=(\psi(x_1,y_1,x_2),t):=((-x_1,y_1,-x_2),t). 
\] 
The map $\psi=\begin{pmatrix} -1& 0 & 0\\ 
0 & 1 & 0 \\ 0 & 0 & -1 \end{pmatrix} \in SO(3)$ is isotopic to the identity map. 
Let $\psi_s$ be a smooth path in $SO(3)$ with $s\in[0,1]$, $\psi_0=Id$ and $\psi_1=\psi$. 
This induces an isotopy $\tilde{\phi}_s:U\to U$, $s\in [0,1]$,  
\[ 
\tilde{\phi}_s((x_1,y_1,x_2),t)=(\psi_s(x_1,y_1,x_2),t). 
\] 
Now we extend $\tilde{\phi}_s$ over $\R^4$ with compact support just as in (i) to 
get $\phi'\in\Diff_0^c(\R^4)$ which is isotopic to the identity map, and 
$\phi'|_L=\phi$. 
This completes the proof. 
\end{proof}

Let 
\[ 
\cR\subset GL(2,\Z) 
\] 
 be the subgroup generated by elements of $\cL(T)=G_\mu$ and by $\tau_j^2$, $\bar{r}_j$ for $j=1,2$. Clearly we have the following 
inclusions as subgroups: 
\[ 
\cR\subset \cS(T)\subset \cX. 
\] 
Below we will show that $\cX\subset \cR$, hence $\cR=\cS(T)=\cX$. To begin with, 
let us consider  the subgroup 
$\cE\subset GL(2,\Z)$  generated by $\tau_1^2$ and 
$\tau_2^2$. It is shown by Sanov \cite{San} that $\cE$ is free (see also \cite{Br}) and 
\[ 
\cE=\Big\{ \begin{pmatrix} 1+4p & 2s\\ 2r & 1+4q \end{pmatrix}\in GL(2,\Z)\mid 
p,q,r,s\in \Z\Big\}. 
\]

\begin{prop} \label{rsx} 
The group $\cX$ is contained in $\cR$. Hence $\cR=\cS(T)=\cX$. 
\end{prop}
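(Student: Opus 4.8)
The plan is to show that the subgroup $\cR$, which contains $\cL(T)=G_\mu$ together with the squared Dehn twists $\tau_j^2$ and the reflections $\bar r_j$, already exhausts $\cX=\cX^o\sqcup\cX^e$; since we always have the inclusions $\cR\subset\cS(T)\subset\cX$, this forces equality throughout. Because $f_1=\bigl(\begin{smallmatrix}0&1\\1&0\end{smallmatrix}\bigr)\in G_\mu\subset\cR$ conjugates $\cX^o$ to itself and swaps $\tau_1^2\leftrightarrow\tau_2^2$ and $\bar r_1\leftrightarrow\bar r_2$, and because multiplying any element of $\cX^e$ by $f_1$ lands in $\cX^o$, it suffices to prove $\cX^o\subset\cR$. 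So the real content is: every orientation-preserving matrix $\bigl(\begin{smallmatrix}1+2p&2s\\2r&1+2q\end{smallmatrix}\bigr)$ of determinant $\pm1$ (here necessarily $+1$) lies in $\cR$.

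First I would record that $\cR\cap SL(2,\Z)$ already contains a large explicit list: the generator $g_1$ of $G_\mu^+$, the products $\bar r_1\bar r_2=-I$ (central), the elements $\bar r_i\tau_i^2\bar r_i=\tau_i^{-2}$, and — crucially — the free group $\cE=\langle\tau_1^2,\tau_2^2\rangle$ of Sanov, which by the displayed formula is exactly $\{(\begin{smallmatrix}1+4p&2s\\2r&1+4q\end{smallmatrix})\}$. The subgroup $\cX^o$ sits between $\cE$ and $SL(2,\Z)$-with-extra-parity, and one computes that $\cX^o\cap SL(2,\Z)$ is generated over $\cE$ by $-I$ and by one more element realizing the "diagonal entries $\equiv 3\pmod 4$" coset — e.g.\ $-\tau_1^2=(\begin{smallmatrix}-1&-2\\0&-1\end{smallmatrix})$ or the element $g_1^2\in G_\mu^+$. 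Concretely, the index of $\cE$ inside $\cX^o\cap SL(2,\Z)$ is finite (it is $2$, detected by the residue mod $4$ of $g_{11}$), so one exhibits a coset representative lying in $\cR$ and concludes $\cX^o\cap SL(2,\Z)\subset\cR$. Then the determinant $-1$ part of $\cX^o$ is obtained by multiplying by any single fixed element of $\cR\cap\cX^o$ of determinant $-1$, namely $\bar r_1$ itself: $\bar r_1\cdot(\cX^o\cap SL(2,\Z))$ sweeps out all of $\cX^o\cap GL(2,\Z)\setminus SL(2,\Z)$ because left multiplication by $\bar r_1$ only flips the parity pattern within the row, which $\cX^o$ already allows.

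The cleanest way to package the coset computation is via reduction: given $A=(\begin{smallmatrix}1+2p&2s\\2r&1+2q\end{smallmatrix})\in\cX^o$, I would first multiply on the left by $\bar r_1$ if $\det A=-1$ to reduce to $\det A=1$, then multiply on the left by $-I$ (a product of reflections in $\cR$) if $g_{11}\equiv 3\pmod 4$ to reduce to $g_{11}\equiv 1\pmod 4$; at that point all four entries have the parities defining $\cE$, so $A\in\cE\subset\cR$ by Sanov's theorem. Running the reductions backwards shows $A\in\cR$ in all cases.

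The main obstacle is the last bookkeeping step — verifying that after the two left-multiplications the matrix genuinely lands in Sanov's group $\cE$ rather than merely in some larger congruence subgroup. One must check the parity/mod-$4$ bookkeeping carefully: an element of $\cX^o$ has off-diagonal entries even and diagonal entries odd, but $\cE$ additionally requires $g_{11}\equiv g_{22}\equiv 1\pmod 4$; since $\det A=1$ gives $(1+2p)(1+2q)-4rs=1$, i.e.\ $2(p+q)+4pq=4rs$, one deduces $p+q$ is even, hence $g_{11}$ and $g_{22}$ are congruent mod $4$ to each other, so the single correction by $-I$ (which adds $2$ to both $p$ and $q$ modulo the relevant power of two, flipping both from $3$ to $1$ simultaneously) suffices. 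Once this parity coherence is confirmed, the inclusion $\cX^o\subset\cR$ is immediate, and with it $\cR=\cS(T)=\cX$.
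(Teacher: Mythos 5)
Your argument is correct and follows essentially the same route as the paper: reduce $\cX^e$ to $\cX^o$ via $f_1$, then left-multiply by the reflections $\bar{r}_1$, $\bar{r}_2$ (equivalently by $\bar{r}_1$ and $\bar{r}_1\bar{r}_2=-e$) to land in Sanov's free group $\cE$. Your two-step reduction (first normalize the determinant, then the residue of the diagonal entries mod $4$, using the parity coherence $p\equiv q\pmod 2$ forced by $\det=1$) is just a repackaging of the paper's four-case split on the parities of $p$ and $q$, and the composite reflections used in each case coincide.
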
 

\begin{proof} 
Since $\cX^e=f_1\cX^o$ and $f_1\in \cR$, it suffices to show that if $h\in \cX^o$ 
then $h\in\cR$. Our strategy here is to show that for $h\in \cX^o$ there exists a 
suitable element $g\in \cR$ such that $gh\in \cE$. Then $h=g^{-1}(gh)\in \cR$. 

Write $h=\begin{pmatrix} 1+2p & 2s \\ 2r & 1+2q \end{pmatrix}$. We divide the proof into four cases according to the parity of $p$ and $q$: 

\begin{enumerate} 
\item If both $p$ and $q$ are even, then already $h\in \cE\subset \cR$. 

\item If both $p$ and $q$ are odd, then 
\[  
(\bar{r}_1\bar{r}_2)h=\begin{pmatrix} -1 & 0\\ 0 & -1\end{pmatrix} 
\begin{pmatrix} 1+2p & 2s \\ 2r & 1+2q \end{pmatrix}=
\begin{pmatrix} 1-2(1+p) & -2s \\ -2r & 1-2(1+q)\end{pmatrix}  \in \cE. 
\] 
Hence $h\in \cR$ because $\bar{r}_1,\bar{r}_2\in\cR$. 

\item If $p$ is odd and $q$ is even, then again 
\[ 
\bar{r}_1h=\begin{pmatrix} -1 & 0\\ 0 & 1\end{pmatrix} 
\begin{pmatrix} 1+2p & 2s \\ 2r & 1+2q \end{pmatrix}=
\begin{pmatrix} 1-2(1+p) & 2s \\ -2r & 1+2q\end{pmatrix} \in \cE,  
\] 
and we have $h\in \cR$. 

\item The case that $p$ is even and $q$ is odd is similar: simply observe that $\bar{r}_2h\in \cE$. 
\end{enumerate} 

Thus we have proved that $\cX\subset \cR$ and hence 
$\cS(T)=\cX=\cR$. 
\end{proof}

\begin{prop} \label{cX} 
The group $\cS(T)\subset GL(2,\Z)$ is generated by $f_1$, $f_2$ and $\bar{r}_1$. 
\end{prop}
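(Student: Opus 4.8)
The plan is to show that the three given elements $f_1$, $f_2$, and $\bar r_1$ generate all of $\cS(T)=\cX=\cR$, where by Proposition \ref{rsx} the group $\cR$ is generated by $f_0,f_1$ (generators of $\cL(T)=G_\mu$) together with $\tau_1^2,\tau_2^2,\bar r_1,\bar r_2$. So it suffices to express each of these seven generators as a word in $f_1,f_2,\bar r_1$. First I would record the matrix identities among the reflections: since $f_1=\begin{pmatrix}0&1\\1&0\end{pmatrix}$ conjugation by $f_1$ swaps the two coordinates, one gets $\bar r_2=f_1\bar r_1 f_1$ immediately, so $\bar r_2$ is eliminated. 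Next, recalling $f_0=\begin{pmatrix}1&2\\0&-1\end{pmatrix}$ and $f_2=\begin{pmatrix}-1&0\\2&1\end{pmatrix}$, a direct computation gives $\bar r_1 f_2=\begin{pmatrix}-1&0\\0&1\end{pmatrix}\begin{pmatrix}-1&0\\2&1\end{pmatrix}=\begin{pmatrix}1&0\\2&1\end{pmatrix}=\tau_2^{-2}$, hence $\tau_2^2=(\bar r_1 f_2)^{-1}\in\langle f_1,f_2,\bar r_1\rangle$; and similarly $f_0\bar r_1=\begin{pmatrix}1&2\\0&-1\end{pmatrix}\begin{pmatrix}-1&0\\0&1\end{pmatrix}=\begin{pmatrix}-1&2\\0&-1\end{pmatrix}=-\tau_1^{-2}$, so $\tau_1^2=-(f_0\bar r_1)^{-1}$, once we also know $-I\in\langle f_1,f_2,\bar r_1\rangle$.

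The remaining task is therefore to express $f_0$ and $-I$ (equivalently $\bar r_1\bar r_2$) in terms of $f_1,f_2,\bar r_1$. For $-I$ I would simply compute $\bar r_1\bar r_2=\bar r_1(f_1\bar r_1 f_1)$, which is already in the subgroup, and check by matrix multiplication that this equals $\begin{pmatrix}-1&0\\0&-1\end{pmatrix}$. For $f_0$, I would use the relation $f_0=g_n f_m$-type identities from Section \ref{lmg}: since $G_\mu\cong D_\infty$ is generated by any two distinct reflections, and $f_2\in G_\mu^-$, $f_1\in G_\mu^-$, it is enough to verify that $f_0\in\langle f_1,f_2\rangle$. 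Concretely, $f_1 f_2=\begin{pmatrix}0&1\\1&0\end{pmatrix}\begin{pmatrix}-1&0\\2&1\end{pmatrix}=\begin{pmatrix}2&1\\-1&0\end{pmatrix}=g_{-1}^{\pm1}$ (a generator of $G_\mu^+\cong\Z$), and then $f_0=g_n f_1$ for the appropriate $n$ by the identity $g_n f_m=f_{n+m}$ stated in the excerpt; chasing indices, $f_0=(f_1 f_2)^{\pm1} f_1$ up to sign of the exponent, which I would pin down by one explicit multiplication.

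Assembling these: $\bar r_2,-I,\tau_1^2,\tau_2^2$ and $f_0$ all lie in $\langle f_1,f_2,\bar r_1\rangle$, and $f_1$ is a generator by hypothesis, so $\cR\subseteq\langle f_1,f_2,\bar r_1\rangle$; the reverse inclusion is clear since $f_1,\bar r_1\in\cR$ and $f_2\in G_\mu\subset\cR$. By Proposition \ref{rsx} this gives $\cS(T)=\langle f_1,f_2,\bar r_1\rangle$, completing the proof. I do not expect a genuine obstacle here — it is a bookkeeping exercise in $GL(2,\Z)$ — but the one point needing care is keeping the $D_\infty$ index conventions ($g_n=(g_1)^n$, $f_1 f_0=g_1$, $g_n f_m=f_{n+m}$, and the analogous relation for $f_2$ in place of $f_0$) consistent with the sign choices, so that the exponents in the words for $f_0$ and $\tau_j^2$ come out correctly; I would settle each by a single $2\times 2$ matrix product rather than trusting the abstract relations blindly.
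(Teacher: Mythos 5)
Your proposal is correct and follows essentially the same route as the paper: starting from $\cR=\cS(T)=\cX$ (Proposition \ref{rsx}) and its generating set $f_0,f_1,\tau_j^2,\bar r_j$, you eliminate each generator by an explicit $2\times 2$ identity, and your identities ($\bar r_2=f_1\bar r_1f_1$, $\tau_2^2=f_2\bar r_1$, $f_0=f_1f_2f_1$, and $\tau_1^2$ via $f_0\bar r_1=-\tau_1^{-2}$ together with $-I=\bar r_1\bar r_2$) all check out. The paper uses the marginally shorter relation $\tau_1^2=\bar r_2 f_0$, which avoids routing through $-I$, but this is a cosmetic difference.
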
 

\begin{proof} 
Recall that $\cS(T)=\cR$ is generated by $\bar{r}_j$ and $\tau_j^2$ with $j=1,2$,  and by elements of $G_\mu$. The group $G_\mu$ is generated by $f_1$ and $f_0$. 
Observe that 
\[ 
\tau_1^2=\bar{r}_2f_0, \quad \tau_2^2=f_2\bar{r}_1=f_1f_0f_1\bar{r}_1, 
\quad  \bar{r}_2=f_1\bar{r}_1f_1, . 
\] 
So indeed $\cS(T)$ is generated by the three elements $f_0,f_1,\bar{r}_1$ of order 2. Note that $(\bar{r}_1f_1)^{-1}=f_1\bar{r}_1=-\bar{r}_1f_1$, $(\bar{r}_1f_1)^2=(f_1\bar{r}_1)^2=-e$.  The element  $-e$ commutes with every element of $\cS(T)$. 
\end{proof} 

This concludes the proof of Theorem \ref{cS}.

\section{Proof of Proposition \ref{StoL}} \label{stol}

We divide the proof into two steps. In Step 1 we show that there 
exists a smooth isotopy $\phi_s$ with  $\phi_1(T)=L$ such that 
$\phi_1^*\mu_L=\mu_T$. In Step 2 we modify $\phi_s$ so that 
$\phi_s(T\setminus D)$ is Lagrangian for all $t$.  

\vspace{.2in} 
\noindent
{\bf Step 1:} \  
Let $\psi_s\in \Diff_0^c(\R^4)$, $s\in [0,1]$, be a smooth isotopy with $\psi_0=id$ and $\psi_1(L)=T$. Then 
$\psi_1^*\mu_L-\mu_T\in 4\cdot H^1(T,\Z)$ by Theorem 
\ref{L0L1}, and hence $\psi_1^*\mu_L=\mu_T\circ g$ for some 
$g\in \cX_T$. Since $\cX_T=\cS(T)$ 
by Proposition \ref{rsx}, there exists a smooth self-isotopy $\psi'_s$ of $T$ with $(\psi'_1)_*=g^{-1}$ and hence 
$(\psi'_1)^*(\psi_1^*\mu_L)=(\psi'_1)^*(\mu_T\circ g)=\mu_T$. 

Now define 
\[ 
\phi_s=\begin{cases} 
\psi'_{2s} & \text{ for $0\leq s\leq 1/2$}, \\ 
\psi_{2s-1}\circ \psi'_1 & \text{ for $1/2\leq s\leq 1$}. 
\end{cases} 
\] 
Then $\phi_s\in \Diff_0^c(\R^4)$, $\phi_0=id$, $\phi_1(T)=L$, and 
$\phi_1^*\mu_L=(\psi_1\circ\psi'_1)^*\mu_L=(\psi'_1)^*\psi_1^*\mu_L=\mu_T$. 

Let $L_s:=\phi_s(T)$ for $s\in[0,1]$. Then $L_0=T$ and $L_1=L$. 

\vspace{.2in} 
\noindent 
{\bf Step 2:} \  
We can improve the smooth isotopy $L_s$ so that it is 
indeed a Lagrangian isotopy outside a disc:

\begin{lem} \label{x} 
Let $L_s=\phi_s(L_0)$, $s\in[0,1]$, be a smooth isotopy between a Clifford torus $T=L_0$ and a Lagrangian torus $L=L_1$ with $\phi_s\in \Diff_0^c(\R^4)$, 
$\phi_0=id$, and $\phi_1^*\mu_L=\mu_T$. Then there exists an smooth isotopy 
$L'_s=\phi'_s(L'_0)$ between $T=L'_0$ and $L=L'_1$ and a disc $D\subset T$ 
such that $L'_s\setminus \phi'_s(D)$ is Lagrangian for all $s\in [0,1]$. 
\end{lem}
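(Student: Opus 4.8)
The plan is to use the freedom in the starting Clifford torus, together with the observation of Proposition \ref{Ds}, to make the isotopy Lagrangian along almost all of $T$. First I would take $T=T_{a,b}$ with $a$ very small, so that $T$ is a thin Lagrangian torus obtained as the boundary $\partial D^\w$ of the symplectic normal disc bundle of a fixed core loop $C_0=\{(0,be^{it})\}$, with fibre radius $a$. Fix a small embedded disc $D\subset T$; its complement $T\setminus D$ deformation retracts onto a core circle, so a neighbourhood of $T\setminus D$ in $T$ sits inside the symplectic normal disc bundle of an \emph{arc} (not a full loop) of core curves. The point is that an arc has no monodromy obstruction: the framing invariants $\mu_{C}(\sigma)$ that governed Proposition \ref{0frame} and Theorem \ref{L0L1} are only constraints for closed loops, so over an arc one can always Lagrangianise.

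The key steps, in order, would be: (1) Using Proposition \ref{lmgab}-style conjugation, reduce to $L_0=T_{a,b}$ with $a$ small and realise $T$ as $\partial D^\w_{C_0}$; fix $D$ and a slightly larger disc $D'\supset D$ in $T$. (2) Write $T\setminus D'$ as (a subset of) the symplectic normal disc bundle over an arc $\alpha\subset C_0$; track $\phi_s(T\setminus D')$ and, using that $\phi_s$ is an ambient diffeotopy, use Weinstein's isotropic neighbourhood theorem along $\phi_s(\alpha)$ to identify a neighbourhood of $\phi_s(\alpha)$ symplectically with the model $T^*\alpha\times\R^2$. (3) Because the condition $\phi_1^*\mu_L=\mu_T$ matches the framing data at the two ends, and because over the arc $\alpha$ there is no closed-loop constraint, one can homotope $\phi_s$ (rel $D$) so that $\phi_s$ carries the symplectic normal disc bundle of $\alpha$ to the symplectic normal disc bundle of $\phi_s(\alpha)$, matching framings, exactly as in part (ii) of Proposition \ref{0frame}. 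Then by Proposition \ref{Ds} the image $\phi_s(T\setminus D')$, being the boundary of a symplectic normal disc bundle (of small enough radius) over the arc, is Lagrangian for every $s$. (4) Interpolate: shrink fibre radii if necessary (uniformly in $s$) and absorb the discrepancy between $D'$ and $D$ into the "bad" region, obtaining $\phi'_s$ with $\phi'_s(T\setminus D)$ Lagrangian while still $\phi'_0=id$, $\phi'_1(T)=L$.

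I expect the main obstacle to be Step (3): making precise that the Maslov-matching condition $\phi_1^*\mu_L=\mu_T$ is exactly what is needed to kill the framing discrepancy \emph{over the arc $\alpha$} and to do so by a homotopy of $\phi_s$ supported away from $D$. One must check that the relevant relative framing invariant lives in $[(\alpha,\partial\alpha),(S^1,\mathrm{pt})]$, which is trivial, so that the only genuine invariant is the boundary data; and one must ensure the homotopy can be taken to fix $L_0=T$ at $s=0$ and $L_1=L$ at $s=1$ and to be supported in a neighbourhood of $\alpha$, i.e. away from $D$. The rest — Weinstein charts along a moving arc, shrinking fibre radii uniformly in $s$ as in Proposition \ref{Ds}, and the elementary interpolation of Step (4) — is routine and parallels arguments already used in Sections \ref{loop} and \ref{lmg}.
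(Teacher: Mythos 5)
There is a genuine gap, and it occurs at the very first step of your reduction. The complement $T\setminus D$ of an embedded disc $D$ in a torus is a \emph{once-punctured torus}: it deformation retracts onto a wedge of two circles, not onto a single core circle, and both generators of $H_1(T,\Z)$ are represented by closed loops contained in $T\setminus D$. Consequently $T\setminus D'$ cannot be realized inside the symplectic normal disc bundle of an \emph{arc} $\alpha$ of the core curve $C_0$ (the boundary of that bundle over an arc is an annulus, and removing the part of $T$ lying over a subarc of $C_0$ deletes an annulus from $T$, not a disc). Your key claim in Step (3) --- that over the arc there is ``no closed-loop constraint'' and the only invariant is relative, living in $[(\alpha,\partial\alpha),(S^1,\mathrm{pt})]=0$ --- therefore does not apply to the region you actually need to Lagrangianise: that region carries two independent loop directions, each with its own framing/Maslov obstruction, which is precisely why the hypothesis $\phi_1^*\mu_L=\mu_T$ (equality of \emph{both} Maslov numbers) is needed and why the paper works with a one-point union $\gamma\cup\gamma'$ of two generating circles, whose neighbourhood $Q$ has complement $D=T\setminus Q$ a disc.

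A second, independent issue is that even along a single closed curve $\gamma$ you must Lagrangianise the entire one-parameter family $\phi_s(\gamma)$, $s\in[0,1]$, relative to the already-Lagrangian endpoints $s=0,1$. The obstruction is then a two-dimensional one: the differentials $(\phi_s)_*(t)$ assemble into a map $I^2/\partial I^2\cong S^2$ into the relevant structure group $A\subset GL^+(4,\R)$, and the paper kills it using $\pi_2(A)\cong\pi_2(SO(3))=0$. You flag the need for the homotopy to fix the ends at $s=0,1$ but do not supply the argument; Proposition \ref{0frame}(ii), which you invoke, treats a single diffeomorphism (one value of $s$), not the family rel endpoints. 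To repair the proof you would need to (a) replace the arc $\alpha$ by two transversally intersecting generating circles (or equivalently a spine of $T\setminus D$) and run the framing-matching argument along each, and (b) add the $\pi_2$-vanishing argument to handle the $s$-family with fixed ends --- at which point you have essentially reconstructed the paper's proof.
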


\begin{proof} 
Take two simple curves $\gamma,\gamma'\subset T$ which generate $H_1(T,\Z)$,  and $\gamma$ intersects with $\gamma'$ exactly at one point $p\in T$. Fix an orientation of $T$. 
We orient $\gamma$ and $\gamma'$ so that the homological intersection $\gamma\cdot\gamma'$ is $1$. 
Denote $\gamma_s:=\phi_s(\gamma)$ and $\gamma'_s:=\phi_s(\gamma')$ with 
induced orientations. Also let $p_s:=\phi_s(p)$. 

Let us start with $\gamma_s$. Let $2m=\mu_T(\gamma_0)=\mu_L(\gamma_1)$. Let 
$\sigma^m_s\subset N^\w_s$ denote the $m$-framing of the symplectic normal bundle $N^\w_s$ of 
$\gamma_s$, so $\mu_{\gamma_s}(\sigma^m_s)=2m$. Clearly we may take $\sigma^m_0$ to be a non-vanishing section 
of the normal bundle $N_{\gamma/T}$ of $\gamma=\gamma_0\subset T$.  Likewise we may take $\sigma^m_1=(\phi_1)_*(\sigma^m_0)$ since $\phi_1^*\mu_L=\mu_T$.

Trivialize the normal bundle $N_s$ of $\gamma_s$ as $N_s=S^1\times \R\times \R^2$ with coordinates 
$(t,t^*,x,y)$ so that (i) $\gamma_s=S^1\times \{ 0\}\times \{ 0\}$, (ii) $N^\w_s=S^1\times \{ 0\}\times \R^2$, and 
(iii) $\sigma^m_s(t)=(t,0,\epsilon, 0)$ for some $\epsilon>0$. This is exactly the same setup used in the proof of 
Proposition \ref{0frame}(i) except that $\sigma^0_s$ is replaced by $\sigma^m_s$ here. With respect to the trivialization of $N_s$  the differential of $\phi_s$ along $\gamma_s$ defines a loop with base point $Id$ in the subgroup 
$A\subset GL^+(4,\R)$ 
consisting of matrices of the form $\begin{pmatrix} 1 & * \\ 0 & GL^+(3,\R)\end{pmatrix}$. Note that $\phi_0$ and $\phi_1$ correspond to the constant loop. Thus 
the total of the family $\phi_s$ corresponds to a smooth map  $\Phi :I^2/\partial I \cong S^2\to A$, with $I^2=[0,1]_s\times [0,2\pi]_t$, $\Phi(s,t):=(\phi_s)_*(t)$. 
Since $\pi_2(A,Id)\cong \pi_2(SO(3,\R), Id)=0$ there exists a smooth homotopy 
$\Xi:( I^2/\partial I^2)\times [0,1]\to A$ such that $\Xi(\cdot,0 )=\Phi$, 
$\Xi(\cdot,1 )=Id$, and 
$\Xi(p,u)=Id$ for $p\in \partial I^2$, $\forall u\in [0,1]$.

This implies that for each $s$ there is a tubular neighborhood $U_s\subset\R^4$ of 
$\gamma_s$, a smooth 
family of maps $\phi_{s,u}\in \text{Diff}^+_0(\R^4)$ with $\phi_{s,0}=\phi_s$, 
$\phi_{s,u}=\phi_s$ on $\gamma_s$ and $\R^4\setminus U_s$, and $\phi_{i,u}=\phi_i$ for $i=0,1$, such that $\phi_{s,1}(T)$ is Lagrangian along 
$\gamma_s$, i.e., 
$T_{\gamma_s}\phi_{s,1}(T)$ is Lagrangian. By a further perturbation if necessary, we may assume that there exists a tubular neighborhood 
$V\subset T$ of $\gamma_0$ 
such that $\phi_{s,1}(V)$ is Lagrangian. 

Now apply the same argument to $\gamma'_s$ and $\phi_{s,1}$, just like what we have 
done for $\gamma_s$ and $\phi_s$. We then get an open neighborhood $Q\subset 
T$ of $\gamma\cup \gamma'$ with $D:=T\setminus Q$ diffeomorphic to a 2-disc, and a new isotopy $L'_s=\phi'_s(T)$ of $T=L_0$ and 
$L=L_1$ with $\phi'_s\in \Diff_0^c(\R^4)$, $\phi'_0=id$, 
such that $Q_s:=\phi'_s(Q)\subset L'_s$ is Lagrangian for $s\in [0,1]$. 
We may assume that $C_s:=\partial Q_s$ are smooth for all $s$. Take $D=T\setminus Q$. 
\end{proof} 

This completes the proof of Proposition \ref{StoL}.

\vspace{.2in } 
\noindent
Department of Mathematics \\ 
National Central University  \\ 
Chung-Li, Taiwan \\ 

\noindent 
yau@math.ncu.edu.tw


\end{document}